\documentclass[final]{ws-ccm}
\usepackage[notcite,notref]{showkeys}
\usepackage{pdfsync}
\usepackage[usenames,dvipsnames]{color}
\usepackage{mathtools}
\usepackage{mathrsfs}

\newcommand{\RR}{{\mathbb R}}

\newcommand{\NN}{{\mathbb N}}

\newcommand{\be}{\begin{equation}}
\newcommand{\ee}{\end{equation}}
\newcommand{\bea}{\begin{eqnarray}}
\newcommand{\eea}{\end{eqnarray}}
\newcommand{\beann}{\begin{eqnarray*}}
\newcommand{\eeann}{\end{eqnarray*}}

\setlength{\textheight}{21.5cm} \setlength{\textwidth}{13.5cm}
\setlength{\parindent}{0.5cm} \setlength{\topmargin}{-0.1cm}
\setlength{\oddsidemargin}{1.6cm} \setlength{\evensidemargin}{1.6cm}

\begin{document}

\markboth{N. Fusco, V. Millot \&  M. Morini} {A quantitative isoperimetric inequality for fractional perimeters}

\title{A QUANTITATIVE ISOPERIMETRIC INEQUALITY\\ 
FOR FRACTIONAL PERIMETERS}

\author{NICOLA FUSCO}
\address{Dipartimento di Matematica e Applicazioni ``R. Caccioppoli''\\
Universit\`a degli Studi di Napoli ``Federico II''\\
80126 Napoli, Italy\\
\emph{\tt{n.fusco@unina.it}}}

\author{VINCENT MILLOT}
\address{Universit\'e Paris Diderot - Paris 7\\
CNRS, UMR 7598 Laboratoire J.L. Lions\\
F-75005 Paris, France\\
\emph{\tt{millot@math.jussieu.fr}}}

\author{MASSIMILIANO MORINI}
\address{Dipartimento di Matematica\\
Universit\`a degli Studi di Parma \\
43124 Parma, Italy\\
\emph{\tt{massimiliano.morini@unipr.it}}}

\maketitle

\begin{abstract}
{\bf Abstract.} Recently Frank \& Seiringer have shown an isoperimetric inequality for nonlocal perimeter functionals arising from Sobolev 
seminorms of fractional order. This isoperimetric inequality  is improved here in a quantitative form. \end{abstract}


\ccode{}

\section{Introduction}                        

Isoperimetric inequalities play a crucial role in many areas of mathematics such as geometry, linear and nonlinear PDEs, or probability theory.   
In the Euclidean setting, it states that among all sets of prescribed measure, balls 
have the least perimeter. More precisely, for any Borel set $E\subset\RR^N$ of finite Lebesgue measure, 
\begin{equation}\label{stdiso}
N|B|^{1/N}|E|^{(N-1)/N}\leq P(E)\,,
\end{equation}
where $B$ denotes the unit ball of $\mathbb{R}^N$ centered at the origin. Here $P(E)$ denotes the distributional perimeter of $E$ which coincides with the $(N-1)$-dimensional 
measure  of $\partial E$ when $E$ has a (piecewise) smooth boundary. It is a well known fact that inequality \eqref{stdiso} is strict unless 
$E$ is a ball. Here the natural framework for studying the isoperimetric inequality is the theory of sets of finite perimeter.  
We briefly recall that a Borel set $E$ of finite Lebesgue measure is said to be of finite perimeter if its characteristic function $\chi_E$ belongs to $BV(\mathbb{R}^N)$, 
and then $P(E)$ is given by the total variation of the distributional derivative of $\chi_E$. Throughout this paper, we shall refer to the monograph~\cite{AFP} for the 
basic properties of sets of finite perimeter. 

The {\it isoperimetric deficit} of a set $E$ of finite perimeter is defined as the scaling and translation invariant quantity 
$$ D(E):=\frac{P(E)-P(B_r)}{P(B_r)} \,,$$
where $B_r:=rB$ is the ball having the same measure as $E$, {\it i.e.}, $r^N|B|=|E|$. By the characterization of the equality cases in \eqref{stdiso}, the 
isoperimetric inequality rewrites $D(E)\geq 0$, and $D(E)=0$ if and only if $E$ is a translation of $B_r$. 
Hence the isoperimetric deficit measures in some sense how far is a set from being ball. Finding a quantitative version of \eqref{stdiso} consists in proving 
that the isoperimetric deficit controls a more usual notion of ``distance from the family of the balls". 
To this aim is introduced the so-called {\it Fraenkel asymmetry} of the set $E$, and it is defined 
by
$$A(E):=\min\left\{\frac{|E\triangle B_r(x)|}{|E|}: x\in\RR^N\,,\;r^N|B|=|E|\right\}\,,$$
where $B_r(x):=x+rB$, and $\triangle$ denotes the symmetric difference between sets. 
Note that asymmetry is also invariant under scaling and translations. We then look for a positive constant $C_N$ depending  only on the dimension, and 
an exponent $\alpha>0$   such that  $A(E)\leq C_N \big( D(E)\big)^\alpha$, which can be rewritten as a  
quantitative form of \eqref{stdiso}, 
$$ P(E)\geq \left(1+\left(\frac{A(E)}{C_N}\right)^{1/\alpha}\right) N|B|^{1/N}|E|^{(N-1)/N} \,.$$
We shall not attempt here to sketch the history of this problem, but simply refer to the recent paper by Fusco, Maggi, and Pratelli \cite{FMP} 
(and references therein) where this inequality has been first proved with the optimal exponent $\alpha=1/2$, and to Figalli, Maggi, and Pratelli \cite{FiMP} 
for anisotropic perimeter functionals (see also~\cite{CL}, and~\cite{M} for a survey). 
\vskip5pt

The main goal of this paper is to prove a quantitative isoperimetric type inequality for nonlocal perimeter functionals arising from Sobolev seminorms 
of fractional order. First, let us introduce what we call the fractional $s$-perimeter of a set. For $s\in(0,1)$ and a Borel set $E\subset \RR^N$, $N\geq1$, we define the fractional $s$-perimeter of $E$ by
$$P_s(E):=\int_E\int_{E^c}\frac{1}{|x-y|^{N+s}}\,dxdy\, .$$
If $P_s(E)<\infty$, we observe that 
\begin{equation}\label{seminorm}
P_s(E)=\frac{1}{2}[\chi_E]^p_{W^{\sigma,p}(\RR^N)}\,,
\end{equation}
 for $p\geq 1$ and $\sigma p=s$, where $[\cdot]_{W^{\sigma,p}(\RR^N)}$ denotes the Gagliardo $W^{\sigma,p}$-seminorm and $\chi_E$ the characteristic function of $E$. The functional $P_s(E)$ can be thought as a $(N-s)$-dimensional perimeter in the sense that $P_s(\lambda E)=\lambda^{N-s}P_s(E)$ for 
 any $\lambda>0$ (compare to the $(N-1)$-homogeneity of the standard perimeter), and $P_s(E)$ can be finite even if the Hausdorff dimension 
 of $\partial E$ is strictly greater than $N-1$ (see {\it e.g.} \cite{RS}). 
It is also immediately checked from the definition that $P_s(E)<\infty$  for any  set $E\subset\RR$ of finite perimeter and finite measure. 

The fractional $s$-perimeter has already been investigated by several authors, specially by Caffarelli, Roquejoffre, and Savin \cite{CRS}  who 
studied regularity for sets of minimal $s$-perimeter (see also \cite{CV}). Besides the fact that fractional Sobolev seminorms are naturally related to fractional 
diffusion processes, one motivation for studying $s$-perimeters appears when we look at the asymptotic  $s\uparrow1$. It turns out that 
$s$-perimeters give an approximation of the standard perimeter, and 
more precisely, it follows from \cite{D} (see also \cite{BBM})  that for any (bounded) set $E$ of finite perimeter, 
\begin{equation}\label{asymps1}
\lim_{s\uparrow1}\,(1-s)P_s(E)= K_N P(E)\,,
\end{equation}
where $K_N$ is a positive constant  depending only on the dimension. Analysis by $\Gamma$-convergence as $s\uparrow1$ of $s$-perimeter functionals can be found in \cite{Ponce}, and \cite{ADM}. Concerning the behavior of $P_s(E)$ as $s\downarrow 0$, we finally mention that 
\begin{equation}\label{asymps0}
\lim_{s\downarrow0}\,sP_s(E)=N|B|\,|E|\,,
\end{equation}
for any set $E$ of finite measure and finite $s$-perimeter for every $s\in(0,1)$, as a consequence of~\cite[Theorem 3]{MS}.
\vskip5pt

An isoperimetric type inequality for $s$-perimeters has been recently proved by  Frank \& Seiringer \cite{FS}, and it states that 
for any Borel set $E\subset \RR^N$ of finite Lebesgue measure, 
\begin{equation}\label{fracisop}
|E|^{(N-s)/N}\leq  C_{N,s} P_s(E) \,,
\end{equation}
for a suitable constant $C_{N,s}$, with equality holding if and only if $E$ is a ball. Actually, inequality \eqref{fracisop} can be deduced from a symmetrization 
result due to Almgren \& Lieb~\cite{AL}, 
and the cases of equality have been determined in \cite{FS}. The constant $C_{N,s}$ is given in \cite[formula (4.2)]{FS},  
 and we notice that $C_{N,s}$ is of order $(1-s)$ as $s\uparrow 1$, and of order $s$ as $s\downarrow 0$ by \eqref{asymps1} and \eqref{asymps0} respectively. 

%

Inequality \eqref{fracisop} is of course equivalent to saying that 
\begin{equation}\label{fracisopbis}
P_s(B_r)\leq P_s(E)
\end{equation}
for any Borel set $E\subset \RR^N$ such that $|E|=|B_r|$.  
In this paper we prove a quantitative version of inequality \eqref{fracisopbis}. 
To this purpose we introduce the following scaling and translation invariant quantity extending the standard isoperimetric deficit to the fractional setting. For a Borel set $E\subset \RR^N$ of finite measure and $B_r$ such that $|E|=|B_r|>0$, we define the {\it $s$-isoperimetric deficit} as 
$$D_s(E):=\frac{P_s(E)-P_s(B_r)}{P_s(B_r)} \,.$$
We have the following result. 

\begin{theorem}\label{main}
Let $N\geq1$ and $s\in(0,1)$. There exists a constant $\mathcal{C}_{N,s}$ depending only on $N$ and $s$ such that  for any Borel set 
$E\subset\RR^N$ with $0<|E|<\infty$, 
\begin{equation}\label{QIPI}
A(E)\leq \mathcal{C}_{N,s} D_s(E)^{s/4}\,.
\end{equation}
\end{theorem}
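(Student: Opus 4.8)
The plan is to follow the now-standard strategy for quantitative isoperimetric inequalities, adapted to the nonlocal functional $P_s$: reduce to a quantitative \emph{nearly spherical} estimate via a compactness/selection argument, and then prove that estimate by Fourier analysis on the sphere. Concretely, I would first reduce to the case of sets $E$ with $|E|=|B|$, $D_s(E)$ small, and a quantitative control on the diameter or on the ``tail'' of $E$ at infinity. Indeed, since $P_s(B_r)\le P_s(E)$ and $P_s$ controls $\int_E \int_{E^c}|x-y|^{-N-s}\,dx\,dy$, sets with small deficit cannot have too much mass far away: one gets an a priori bound showing $E$ is, up to a translation, essentially contained in a fixed large ball, with the error measured by $D_s(E)$. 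This replaces the density-estimate step used in the local case and is where the specific structure of $P_s$ enters. With this in hand, a contradiction/compactness argument (sequences $E_n$ with $A(E_n)\ge \eta D_s(E_n)^{s/4}$, $D_s(E_n)\to 0$) would produce a limiting ball; the subtlety is that $P_s$ is lower semicontinuous but not continuous under $L^1$ convergence, so I would need to upgrade $L^1$ convergence to convergence of the $P_s$-energies using the deficit going to zero — again exploiting $P_s(E_n)\to P_s(B)$ and Brezis--Lieb type splitting for the double integral.

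Second, I would establish the nearly spherical case: for $E$ whose boundary is a graph over $\partial B$, say $\partial E=\{(1+u(\xi))\xi:\xi\in\partial B\}$ with $\|u\|_{C^1}$ small and the barycenter and volume constraints normalized, I claim
\begin{equation}\label{nsplan}
P_s(E)-P_s(B)\ge c_{N,s}\,[u]_{H^{s/2}(\partial B)}^2 \ge c_{N,s}\,\|u\|_{L^2(\partial B)}^2\,,
\end{equation}
where the first inequality is the content and the second is the Poincaré-type gap coming from killing the $\ell=0,1$ spherical-harmonic modes via the volume and barycenter constraints. To get \eqref{nsplan} I would expand $P_s(E)-P_s(B)$ to second order in $u$: writing the double integral in polar-type coordinates adapted to $\partial B$, the first variation vanishes (balls are $s$-stationary, by \eqref{fracisopbis}), and the second variation is a positive-definite quadratic form on $\partial B$ which, after diagonalizing in spherical harmonics $Y_\ell$, has eigenvalues $\mu_\ell$ comparable to $\ell^s$ for large $\ell$ — so it dominates the $H^{s/2}$-seminorm. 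The explicit computation of $\mu_\ell$ (or at least the lower bound $\mu_\ell\ge c(\mu_2\wedge \ell^s)$ and $\mu_\ell>0$ for $\ell\ge 2$) is the technical heart; one can either compute it directly, or borrow the second-variation formula for nonlocal perimeters from the regularity literature (Caffarelli--Roquejoffre--Savin, Figalli et al.), whose kernel on the sphere has exactly this behavior.

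Third, I would assemble the two pieces. Given the reduction, for $E$ close to $B$ in $L^1$ I need to pass from ``$E$ close to $B$ in measure'' to ``$\partial E$ is a small $C^1$ graph over $\partial B$'', which is \emph{not} free; the honest route (as in \cite{FMP,FiMP}) is to not require $C^1$ closeness but instead to prove \eqref{nsplan} directly for all sets $E$ with $|E\triangle B|$ small, using the \emph{fractional} replacement for the ``reduction to graphs'' — i.e. comparing $E$ with its ``radial rearrangement'' $E^\star$ having the same measure in each cone from the origin, showing $P_s(E)\ge P_s(E^\star)$ up to a controlled error (this uses the Riesz-type rearrangement / Almgren--Lieb inequality already cited for \eqref{fracisop}), and then applying \eqref{nsplan} to the graph set $E^\star$. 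Then $A(E)\le |E\triangle B|/|B|\lesssim \|u\|_{L^1}\lesssim \|u\|_{L^2}\lesssim (P_s(E)-P_s(B))^{1/2}\lesssim D_s(E)^{1/2}$ in the nearly spherical regime, while outside that regime $A(E)\le 2$ and $D_s(E)\ge c>0$ makes \eqref{QIPI} trivial after enlarging $\mathcal C_{N,s}$; the exponent $s/4$ (rather than $1/2$) is the price paid for the loss in the rearrangement/tail step. The main obstacle I anticipate is precisely that step — controlling the gap between $P_s(E)$ and $P_s(E^\star)$ (equivalently, quantifying the deficit in the Almgren--Lieb symmetrization) — together with making the compactness argument robust despite the non-continuity of $P_s$; the second-variation computation \eqref{nsplan}, while laborious, is essentially a Fourier-on-the-sphere exercise.
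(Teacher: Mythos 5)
The route you sketch --- reduce to nearly spherical sets and prove a Fuglede-type second-variation estimate by Fourier analysis on the sphere --- is a genuinely different strategy from the paper's, which never touches second variations or spherical harmonics. The paper instead represents $P_s(E)$ as a weighted Dirichlet energy in the half-space via the Caffarelli--Silvestre extension (Lemma~\ref{extension}, Remark~\ref{extensionrem}), proves a quantitative P\'olya--Szeg\H{o} inequality for $N$-symmetric functions (Proposition~\ref{PSquant}), reduces to $N$-symmetric sets by the reflection argument of Proposition~\ref{Nsymmetry}, and then applies the quantitative P\'olya--Szeg\H{o} inequality slice by slice in $z$ to the extension $u_E$, closing with a trace-type estimate optimized in the cut-off height $\tau$, which is exactly where the exponent $s/4$ comes from. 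Your strategy is in fact the one later carried out (with the sharp exponent $1/2$) by Figalli, Fusco, Maggi, Millot and Morini via the Cicalese--Leonardi selection principle; it is stronger but substantially heavier, requiring regularity theory for almost-minimizers of $P_s$.

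As written, though, your proposal has two genuine gaps. First, the mechanism you propose for passing from ``$E$ close to $B$ in measure'' to the nearly spherical regime --- replacing $E$ by a cone-wise radial rearrangement $E^{\star}$ and claiming $P_s(E)\ge P_s(E^{\star})$ up to controlled error, invoking Riesz/Almgren--Lieb --- is not backed by any rearrangement inequality. The Riesz and Almgren--Lieb inequalities compare $P_s(E)$ with $P_s$ of the \emph{ball} $B_r$, not with $P_s(E^{\star})$, and $P_s$ is not monotone under cone-wise radial rearrangement (already the local perimeter $P$ fails to be). This is precisely where FMP, FiMP and the later nonlocal papers must do real work (selection principle plus regularity, or mass transport, or --- as in the present paper --- the extension-plus-P\'olya--Szeg\H{o} machinery), and it cannot be obtained for free. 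Second, the exponent $s/4$ is asserted as ``the price paid'' in that step but is never derived: your argument, if it could be closed, would give $1/2$, and nothing in the sketch actually produces $s/4$. The Fuglede-type spherical-harmonics estimate itself is sound and has been established elsewhere (the eigenvalues of the second variation of $P_s$ at the ball do grow like $\ell^s$), but it sits at the end of a chain whose earlier links are missing.
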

\vskip5pt

We emphasize that, as in the standard perimeter case,  the exponent appearing in~\eqref{QIPI} does not depend on the dimension. However we strongly suspect that the optimal exponent should be $1/2$ as for the 
classical quantitative isoperimetric inequality (see \cite{FMP, FiMP, CL}).  The 
dependence on $s$ of the constant $\mathcal{C}_{N,s}$ remains unclear since our method does not yield any precise control as $s\uparrow1$ or 
$s\downarrow0$, but some information can be deduced from  \eqref{asymps1} and \eqref{asymps0}. 

We conclude with a few comments on the proof of Theorem \ref{main}. The key tool used here is a local representation due to Caffarelli \& Silvestre \cite{CS} of the $H^{s/2}$-seminorm. It allows us to rewrite the $s$-perimeter $P_s(E)$ as a Dirichlet type energy of a suitable (inhomogeneous) harmonic extension of the characteristic function of $E$ in $\RR^{N+1}_+$ (see Remark~\ref{extensionrem}). With such a representation in hands, we can adapt some symmetrization techniques developed in \cite{FMP, CFMP}.

\section{Preliminary results}             

Throughout the paper, given $s\in(0,1)$, we shall consider functions belonging to the following weighted Sobolev space
$$
\mathcal{W}^{1,2}_s(\RR^{N+1}_+):=\biggl\{u\in W^{1,1}_{\rm loc}(\RR^{N+1}_+):\,\,\int_{\RR^{N+1}_+}z^{1-s}|\nabla u|^2\,dxdz<+\infty\biggr\}\,,
$$
where $\mathbb{R}^{N+1}_+:=\mathbb{R}^N\times (0,+\infty)$, and $\partial \mathbb{R}^{N+1}_+\simeq \mathbb{R}^N$. 
It can be easily checked that each $u\in\mathcal{W}^{1,2}_s(\RR^{N+1}_+)$  has a trace belonging to $L^2_{\rm loc}(\RR^{N})$ that 
we shall denote by $u(\cdot,0)$.

In the proof of Theorem \ref{main}, a key point is given by the following extension lemma which is a consequence of a recent result by Caffarelli \& Silvestre \cite[Formula (3.7)]{CS}, 
and a well known representation of the $H^{s/2}$-seminorm in Fourier space (see {\it e.g.} \cite[Lemma~3.1]{FLS}). Note that for our purposes we restrict ourselves to $s\in(0,1)$, but Lemma~\ref{extension} 
actually holds for any $s\in(0,2)$. 

\begin{lemma}\label{extension}
Let  $s\in(0,1)$. There exists a constant $\gamma_{N,s}>0$ such that for any function $g\in H^{s/2}(\RR^N)$,  
\begin{equation}\label{SDirichenerg}
\int_{\RR^N}\int_{\RR^N}\frac{|g(x)-g(y)|^2}{|x-y|^{N+s}}\,dxdy= \gamma_{N,s} \int_{\RR^{N+1}_+}z^{1-s}|\nabla u|^2\,dxdz\,, 
\end{equation}
where $u$ is the unique solution in $\mathcal{W}^{1,2}_s(\RR^{N+1}_+)$ of 
$$\begin{cases}
{\rm div}(z^{1-s}\nabla u)= 0 & \text{in }\RR^{N+1}_+\,,\\
u=g & \text{on } \RR^N\,.
\end{cases}
$$ 
Moreover, $u$ is explicitely given by the Poisson formula,
\begin{equation}\label{poisson}
u(x,z)=\lambda_{N,s} \int_{\RR^N} \frac{z^s g(y)}{(|x-y|^2+z^2)^{(N+s)/2}}\,dy\,,
\end{equation}
for a constant $\lambda_{N,s}$ only depending on $N$ and $s$, and $u$ can be characterized as the unique minimizer of
$$
\int_{\RR^{N+1}_+}z^{1-s}|\nabla v|^2\,dxdz\,,
$$
among all functions $v\in\mathcal{W}^{1,2}_s(\RR^{N+1}_+)$ satisfying $v(\cdot,0)=g$.
\end{lemma}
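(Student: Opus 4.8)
\textbf{Proof proposal for Lemma~\ref{extension}.}

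The plan is to combine two classical ingredients: the Fourier-space representation of the Gagliardo $H^{s/2}$-seminorm, and the explicit solution of the degenerate elliptic extension problem via the Poisson kernel. First I would recall that by Plancherel one has
\begin{equation}\label{fourierseminorm}
\int_{\RR^N}\int_{\RR^N}\frac{|g(x)-g(y)|^2}{|x-y|^{N+s}}\,dxdy = c_{N,s}\int_{\RR^N}|\xi|^s|\hat g(\xi)|^2\,d\xi\,,
\end{equation}
for an explicit constant $c_{N,s}>0$ (this is the content of the reference \cite[Lemma~3.1]{FLS}). Next, taking the Fourier transform in the $x$-variable of the equation ${\rm div}(z^{1-s}\nabla u)=0$, one finds that $\hat u(\xi,z)$ solves the ordinary differential equation $z^{1-s}\partial_{zz}\hat u + (1-s)z^{-s}\partial_z\hat u - |\xi|^2 z^{1-s}\hat u = 0$ with $\hat u(\xi,0)=\hat g(\xi)$ and $\hat u(\xi,z)\to 0$ as $z\to\infty$. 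After the substitution $t=|\xi|z$ this becomes a modified Bessel equation, whose unique bounded solution decaying at infinity is $\hat u(\xi,z)=\hat g(\xi)\,\varphi(|\xi|z)$, where $\varphi$ is, up to normalization, $t^{s/2}K_{s/2}(t)$ with $K_{s/2}$ the modified Bessel function of the second kind; note $\varphi(0)=1$.

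With this representation I would then compute the Dirichlet energy in Fourier variables. Since $|\nabla u|^2 = |\nabla_x u|^2 + |\partial_z u|^2$, Plancherel in $x$ gives
\begin{equation}\label{energyfourier}
\int_{\RR^{N+1}_+}z^{1-s}|\nabla u|^2\,dxdz = \int_{\RR^N}|\hat g(\xi)|^2\int_0^\infty z^{1-s}\bigl(|\xi|^2\varphi(|\xi|z)^2 + |\xi|^2\varphi'(|\xi|z)^2\bigr)\,dz\,d\xi\,.
\end{equation}
In the inner integral I substitute $t=|\xi|z$, which produces an overall factor $|\xi|^{s}$ times a fixed integral $\int_0^\infty t^{1-s}(\varphi(t)^2+\varphi'(t)^2)\,dt$; call this finite number $\gamma_{N,s}^{-1}c_{N,s}$ (finiteness near $0$ follows from $1-s>-1$ and near $\infty$ from exponential decay of $K_{s/2}$). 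Comparing with \eqref{fourierseminorm} yields \eqref{SDirichenerg} with a well-defined constant $\gamma_{N,s}>0$. Alternatively, and perhaps cleaner to present, one integrates by parts in $z$: using the equation, $\int_0^\infty z^{1-s}(|\xi|^2\varphi^2+\varphi'^2)\,dz = -\lim_{z\to0^+} z^{1-s}\varphi(|\xi|z)\varphi'(|\xi|z)|\xi|$, and the boundary behavior $z^{1-s}\partial_z u(x,z)\to -\mathrm{(const)}\,(-\Delta)^{s/2}g$ as $z\to 0^+$ reduces the whole energy to a boundary term proportional to $\int g\,(-\Delta)^{s/2}g = \int|\xi|^s|\hat g|^2$. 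For the Poisson formula \eqref{poisson}, I would note that the inverse Fourier transform of $\xi\mapsto\varphi(|\xi|z)$ is, by the known Fourier transform of the Bessel-type profile, exactly the kernel $\lambda_{N,s}\,z^s(|x|^2+z^2)^{-(N+s)/2}$; this identifies $u(x,z)$ with the stated convolution.

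Finally, uniqueness in $\mathcal{W}^{1,2}_s(\RR^{N+1}_+)$ and the variational characterization: if $v$ is any competitor with $v(\cdot,0)=g$, write $v=u+w$ with $w(\cdot,0)=0$; expanding $\int z^{1-s}|\nabla v|^2 = \int z^{1-s}|\nabla u|^2 + 2\int z^{1-s}\nabla u\cdot\nabla w + \int z^{1-s}|\nabla w|^2$, the cross term vanishes by the weak formulation of ${\rm div}(z^{1-s}\nabla u)=0$ (justifying the integration by parts requires approximating $w$ by compactly supported functions in the $z$-direction and using the decay of $u$, which is where a little care is needed), so $\int z^{1-s}|\nabla v|^2 \ge \int z^{1-s}|\nabla u|^2$ with equality iff $\nabla w\equiv 0$, i.e. $w\equiv 0$. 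I expect the main technical obstacle to be precisely this last point — justifying the boundary/integration-by-parts manipulations in the degenerate weighted space, in particular showing that the trace operator is well defined, that $u$ given by \eqref{poisson} indeed lies in $\mathcal{W}^{1,2}_s$ with the correct trace, and that no boundary term at $z=\infty$ is lost — rather than the (essentially bookkeeping) Fourier/Bessel computation of $\gamma_{N,s}$; these density and decay arguments are standard but must be carried out carefully in the $z^{1-s}$-weighted setting.
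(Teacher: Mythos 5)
Your proof is correct and follows the same route the paper points to: the paper does not prove Lemma~\ref{extension} at all, but simply attributes it to the Caffarelli--Silvestre extension (\cite[Formula (3.7)]{CS}) and the Fourier-space representation of the $H^{s/2}$-seminorm (\cite[Lemma~3.1]{FLS}), and your argument — Fourier transform in $x$, reduction to a modified Bessel ODE with solution $t^{s/2}K_{s/2}(t)$, rescaling to extract the $|\xi|^s$ factor, and the standard quadratic expansion for the variational characterization — is precisely the content of those references. The technical caveats you flag (trace theory in the $z^{1-s}$-weighted space, justifying the integration by parts, and verifying that the Poisson extension lies in $\mathcal{W}^{1,2}_s$) are indeed the points that need care and are handled in the cited sources.
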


\begin{remark}
The constant $\lambda_{N,s}$ in \eqref{poisson} is precisely given by (see {\it e.g.} \cite{Abra})
\begin{equation}\label{value}
\lambda_{N,s}=\left( \int_{\RR^N} \frac{1}{(|y|^2+1)^{(N+s)/2}}\,dy \right)^{-1}=\frac{\Gamma\big((N+s)/2\big)}{\pi^{N/2}\,\Gamma(s/2)}\,,
\end{equation}
where $\Gamma$ is Euler's Gamma function.
\end{remark}

\begin{remark}\label{extensionrem}
As a consequence of Lemma \ref{extension} and \eqref{seminorm}, for any Borel set $E\subset \RR^N$ of finite Lebesgue measure and finite $s$-perimeter, one has 
$$P_s(E)=\frac{\gamma_{N,s}}{2} \int_{\RR^{N+1}_+}z^{1-s}|\nabla u_E|^2\,dxdz\,,$$
where $u_E$ is the unique solution in $\mathcal{W}^{1,2}_s(\RR^{N+1}_+)$ of 
\begin{equation}\label{eqUE}
\begin{cases}
{\rm div}(z^{1-s}\nabla u_E)= 0 & \text{in }\RR^{N+1}_+\,,\\
u_E=\chi_E & \text{on } \RR^N\,.
\end{cases}
\end{equation}
Note that formula \eqref{poisson} yields $u_E\in C^{\infty}(\RR^{N+1}_+)$,  $0\leq u_E\leq 1$, and $u_E(x,z)\to 0$ as $|x|\to\infty$ for every $z>0$. 
In particular, for every $z>0$ and $t>0$, the set $\{u_E(\cdot, z)>t\}$ is bounded in $\mathbb{R}^N$. In addition, it follows from \eqref{poisson}-\eqref{value}
that $u_E(x,z)\to 1$ as $z\downarrow 0$ at every point $x$ of density 1 of $E$, and $u_E(x,z)\to 0$ as $z\downarrow 0$ at every point $x$ of density~0.  
\end{remark}

The proof of Theorem \ref{main} also makes use of symmetric rearrangements, and we need to recall some well known facts. 
For a  measurable function $g:\RR^N\to[0,\infty)$ such that for all $t>0$, 
\begin{equation}\label{livfin}
\mu(t):=\big|\{g>t\}\big|<\infty\,,
\end{equation}
 the {\it symmetric rearrangement} $g^\sharp$ of $g$ is defined as the unique radially symmetric decreasing function on $\RR^N$ satisfying 
$$
\left|\{g^\sharp>t\}\right|=\mu(t)\qquad\text{for all $t>0$}\,.
$$
It is well known that if $g\in W^{1,1}_{\rm loc}(\RR^N)$ then also $g^\sharp\in W^{1,1}_{\rm loc}(\RR^N)$. Moreover 
(see {\it e.g.} \cite[Lemma 3.2 and (3.19)]{CF}) for a.e. $t>0$, 
\begin{equation}\label{derivmu}
\mu^\prime(t)=-\int_{\{g^\sharp=t\}}\frac{1}{|\nabla g^\sharp|}\,d{\mathcal H}^{N-1}\leq-\int_{\{g=t\}}\frac{1}{|\nabla g|}\,d{\mathcal H}^{N-1}\,.
\end{equation}
P\'olya--Szeg\"o Inequality states that the Dirichlet integral of $g$ decreases under symmetric rearrangement, {\it i.e.}, 
\begin{equation}\label{stdPolSze}
\int_{\RR^N} |\nabla g^\sharp|^2\,dx\leq \int_{\RR^N} |\nabla g|^2\,dx\,.
\end{equation}
 The next proposition gives a quantitative version of inequality  \eqref{stdPolSze} in the special case where $g$ is an {\it $N$-symmetric function}, {\it i.e.}, a function symmetric with respect to all coordinate hyperplanes 
(see \cite[Theorem 3]{CFMP} for a similar result). 
\begin{proposition}\label{PSquant}
Let $N\geq1$. There exists a positive constant $C_N$ such that for any  nonnegative, $N$-symmetric function $g\in H^1(\RR^N)$, one has
$$
\int_{\RR^N}|g-g^\sharp|\,dx\leq C_N|{\rm supp}\,g|^{\frac{N+2}{2N}}\left(\int_{\RR^N}|\nabla g|^2\,dx-\int_{\RR^N}|\nabla g^\sharp|^2\,dx\right)^{1/2}\,.
$$
\end{proposition}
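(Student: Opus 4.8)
\textbf{Proof strategy for Proposition \ref{PSquant}.} The plan is to track the gap in the Pólya--Szegő inequality layer by layer through the coarea formula and then reconstruct the $L^1$ distance between $g$ and $g^\sharp$ from a slicewise control. Writing $\mu(t)=|\{g>t\}|$ and $E_t=\{g>t\}$, the coarea formula gives $\int_{\RR^N}|\nabla g|^2\,dx=\int_0^\infty\left(\int_{\{g=t\}}|\nabla g|\,d\HHH^{N-1}\right)dt$, and by Cauchy--Schwarz on each level set together with the isoperimetric inequality,
\begin{equation}\label{PSquantpf1}
\int_{\{g=t\}}|\nabla g|\,d\HHH^{N-1}\ \geq\ \frac{P(E_t)^2}{\displaystyle\int_{\{g=t\}}\frac{d\HHH^{N-1}}{|\nabla g|}}\ \geq\ \frac{\big(N|B|^{1/N}\mu(t)^{(N-1)/N}\big)^2}{-\mu'(t)}\,,
\end{equation}
using \eqref{derivmu} for the denominator. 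The same computation for $g^\sharp$ is an equality (level sets are spheres), so the Pólya--Szegő gap dominates $\int_0^\infty \big(N|B|^{1/N}\mu(t)^{(N-1)/N}\big)^2\big(\frac{1}{-\mu'(t)}\big|_{g^\sharp}-\frac{1}{-\mu'(t)}\big|_{g}+\cdots\big)$; more precisely, adding and subtracting I would extract a term controlling $\int_0^\infty\left(P(E_t)-N|B|^{1/N}\mu(t)^{(N-1)/N}\right)^2/(-\mu'(t))\,dt$, i.e.\ a weighted integral of the squared \emph{isoperimetric deficit} of the level sets.

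The next step is to convert the deficit of the level sets into their asymmetry via the classical quantitative isoperimetric inequality $A(E_t)^2\leq C_N D(E_t)$ of \cite{FMP}. Here the $N$-symmetry of $g$ is crucial: each superlevel set $E_t$ is $N$-symmetric, and for an $N$-symmetric set the optimal ball in the definition of $A(E_t)$ can be taken centered at the origin, so $A(E_t)\,|E_t|\geq c_N|E_t\triangle B_{r(t)}|$ where $B_{r(t)}$ is the centered ball of the same measure, and the latter is exactly $|\{g>t\}\triangle\{g^\sharp>t\}|$ up to the centering. Integrating in $t$ and using the layer-cake identity $\int_{\RR^N}|g-g^\sharp|\,dx=\int_0^\infty|\{g>t\}\triangle\{g^\sharp>t\}|\,dt$, I then need to pass from $\int_0^\infty A(E_t)|E_t|\,dt$ to the square-root of the gap. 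This is done by Cauchy--Schwarz: write $A(E_t)|E_t| = \big(A(E_t)|E_t|^{(N+2)/(2N)}\big)\cdot|E_t|^{(N-2)/(2N)}$ — or more robustly, pair the weight $\mu(t)^{(N-1)/N}$ from \eqref{PSquantpf1} against $\mu(t)^{1/N+\cdots}$ — so that one factor reassembles the deficit integral bounded by the Pólya--Szegő gap and the other integrates, against $-\mu'(t)\,dt$, to a power of $|\supp g|=\mu(0^+)$; bookkeeping the exponents yields the stated power $|\supp g|^{(N+2)/(2N)}$.

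The main obstacle I expect is the exponent bookkeeping in this last Cauchy--Schwarz step, and more subtly the possibility that the optimal centers $x(t)$ in $A(E_t)$ vary with $t$ — which is precisely what the $N$-symmetry hypothesis is there to rule out, forcing every $E_t$ to be nearly a \emph{centered} ball, so that the slicewise asymmetries genuinely add up to $\|g-g^\sharp\|_{L^1}$ rather than to the weaker $\inf_x\|g-g^\sharp(\cdot-x)\|_{L^1}$. A secondary technical point is justifying \eqref{PSquantpf1} and the coarea manipulations for a general $g\in H^1$ rather than a smooth one: this requires Sard-type a.e.\ regularity of level sets and the standard fact that $\{|\nabla g|=0\}$ contributes nothing, for which I would invoke \cite{CF} as already cited around \eqref{derivmu}. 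Once these points are in place, the chain reads: Pólya--Szegő gap $\gtrsim \int_0^\infty D(E_t)^{\text{(weighted)}} \gtrsim \int_0^\infty A(E_t)^2(\text{weighted}) \gtrsim \big(\int_{\RR^N}|g-g^\sharp|\big)^2 / |\supp g|^{(N+2)/N}$, which rearranges to the claim.
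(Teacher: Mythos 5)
Your overall strategy matches the paper's proof: coarea plus Cauchy--Schwarz on level sets, the quantitative isoperimetric inequality of~\cite{FMP} applied slicewise, Lemma~\ref{tre} to replace the Fraenkel asymmetry of the $N$-symmetric level sets by the centered symmetric-difference, the layer-cake identity for $\int|g-g^\sharp|$, and a final Jensen/Cauchy--Schwarz with the weight split $\mu^{(N-1)/N}$ versus $\mu^{1/N}$ which produces the factor $|\supp g|^{(N+2)/(2N)}$. However, one intermediate step as written would not close, and you should be aware that your own final summary line quietly corrects it.

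You propose to ``add and subtract'' so as to bound the P\'olya--Szeg\H{o} gap from below by
\begin{equation*}
\int_0^\infty \frac{\bigl(P(E_t)-N|B|^{1/N}\mu(t)^{(N-1)/N}\bigr)^2}{-\mu'(t)}\,dt\,,
\end{equation*}
which you describe as a weighted integral of the \emph{squared} isoperimetric deficit. The gap does dominate this quantity, since $P^2-(P^\sharp)^2=(P+P^\sharp)(P-P^\sharp)\geq(P-P^\sharp)^2$, but it is too weak: writing $(P-P^\sharp)^2 = (P^\sharp)^2 D(E_t)^2$ and then applying $D(E_t)\geq c\,A(E_t)^2$ yields $A(E_t)^4$ in the integrand. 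Matching that against the layer-cake quantity $\int_0^\infty|E_t\triangle B_{r(t)}|\,dt$ then requires H\"older with exponent~$4$ and delivers only $(\text{gap})^{1/4}$ rather than $(\text{gap})^{1/2}$. The correct move, which is what the paper does and what your closing chain ``gap $\gtrsim\int D\gtrsim\int A^2$'' actually presumes, is to keep one factor of $P+P^\sharp\geq 2P^\sharp$:
\begin{equation*}
P(E_t)^2-P(\{g^\sharp>t\})^2\ \geq\ 2\,P(\{g^\sharp>t\})\bigl(P(E_t)-P(\{g^\sharp>t\})\bigr)\ =\ 2\,P(\{g^\sharp>t\})^2\,D(E_t)\ \geq\ c\,P(\{g^\sharp>t\})^2\,A(E_t)^2\,,
\end{equation*}
so the deficit enters linearly and the asymmetry quadratically. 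With that replacement the exponent bookkeeping in your last paragraph is exactly the paper's: by Jensen,
\begin{equation*}
\int_0^\infty\frac{|E_t\triangle B_{r(t)}|^2}{\mu^{2/N}(-\mu')}\,dt\ \geq\ \frac{\bigl(\int_0^\infty|E_t\triangle B_{r(t)}|\,dt\bigr)^2}{\int_0^\infty \mu^{2/N}(-\mu')\,dt}\,,
\end{equation*}
and $\int_0^\infty\mu^{2/N}(-\mu')\,dt\leq\frac{N}{N+2}|\supp g|^{(N+2)/N}$. Two secondary points you flag correctly: the a.e.\ manipulations on level sets for $g\in H^1$ are covered by~\cite{CF} as invoked around~\eqref{derivmu}; and the case $|\supp g|=\infty$ must be treated separately --- the paper truncates, replacing $g$ by $g_\varepsilon=\max\{g,\varepsilon\}-\varepsilon$ and letting $\varepsilon\to0$.
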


\begin{proof}
We assume first that $|{\rm supp}\,g|<+\infty$.
By  H\"older's inequality we estimate for a.e. $t>0$, 
$$\left(\mathcal{H}^{N-1}\left(\{g=t\}\right)\right)^2\leq  \left(\int_{\{g=t\}}|\nabla g|\,d{\mathcal H}^{N-1} \right)\left(\int_{\{g=t\}}\frac{1}{|\nabla g|}\,d{\mathcal H}^{N-1}\right)\,.$$
From the coarea formula, \eqref{derivmu} and the inequality above, we infer that 
\begin{multline}\label{PSquant1}
\int_{\RR^N}|\nabla g|^2\,dx  =\int_0^\infty dt\int_{\{g=t\}}|\nabla g|\,d{\mathcal H}^{N-1} \\
\geq\int_0^\infty\frac{\left({\mathcal H}^{N-1}\left(\{g=t\}\right)\right)^2}{\int_{\{g=t\}}\frac{1}{|\nabla g|}\,d{\mathcal H}^{N-1}}\,dt 
 \geq\int_0^\infty\frac{\left({\mathcal H}^{N-1}\left(\{g=t\}\right)\right)^2}{-\mu^\prime(t)}\,dt\,.
\end{multline}
Since $|\nabla g^\sharp|$ is constant on $\{g^\sharp=t\}$ for a.e. $t>0$, we obtain in the same way, 
\begin{equation}\label{PSquant1sharp}
\int_{\RR^N}|\nabla g^\sharp|^2\,dx  =\int_0^\infty\frac{\left({\mathcal H}^{N-1}\left(\{g^\sharp=t\}\right)\right)^2}{-\mu^\prime(t)}\,dt\,.
\end{equation}
Recalling that $P\big(\{g>t\}\big)={\mathcal H}^{N-1}\big(\{g=t\}\big)$ for a.e. $t>0$, and that $\{g^\sharp>t\}$ is a ball, 
we infer from \eqref{PSquant1}, \eqref{PSquant1sharp}, and  the classical isoperimetric inequality that
\begin{equation}\label{PSquant2}
\begin{split}
\int_{\RR^N}|\nabla g|^2-|\nabla g^\sharp|^2\,dx & \geq \int_0^\infty\frac{P^2\big(\{g>t\}\big)-P^2\big(\{g^\sharp>t\}\big)}{-\mu^\prime(t)}\,dt \\
& \geq 2\int_0^\infty\frac{P\big(\{g>t\}\big)-P\big(\{g^\sharp>t\}\big)}{P\big(\{g^\sharp>t\}\big)}\cdot\frac{P^2\big(\{g^\sharp>t\}\big)}{-\mu^\prime(t)}\,dt \,.
\end{split}
\end{equation}
Assume now that  $N\geq2$.  Since $\{g>t\}$ is an $N$-symmetric set, and $\{g^\sharp>t\}$ is the ball with the same measure centered at the origin,  
the quantitative isoperimetric inequality proved in \cite{FMP} and Lemma~\ref{tre} below yield
\begin{equation}\label{isopquantP1}
\frac{P\big(\{g>t\}\big)-P\big(\{g^\sharp>t\}\big)}{P\big(\{g^\sharp>t\}\big)}\geq  CA\left(\{g>t\}\right)^2\geq 
\frac{ C}{9}\left(\frac{\left|\{g>t\}\triangle\{g^\sharp>t\}\right|}{\left|\{g^\sharp>t\}\right|}\right)^2\,,
\end{equation}
where ${ C}$ denotes a positive constant depending only on $N$.
Next we notice that \eqref{isopquantP1} is trivially true for $N=1$. 

Observing that for $N\geq 1$, 
$$P\big(\{g^\sharp>t\}\big)=N|B|^{1/N}\left|\{g^\sharp>t\}\right|^{\frac{N-1}{N}}=N|B|^{1/N}\mu(t)^{\frac{N-1}{N}} \quad\text{for all $0<t<{\rm ess}\sup g$}\,,$$ 
and, since $\mu$ is decreasing, 
$$ \int_0^\infty\mu(t)^{2/N}(-\mu^\prime(t))\,dt \leq \frac{N}{N+2} \, |{\rm supp}\,g|^{\frac{N+2}{N}}\,,$$
we infer from \eqref{PSquant2} and \eqref{isopquantP1} that 
\begin{align}
\nonumber\int_{\RR^N}|\nabla g|^2-|\nabla g^\sharp|^2\,dx & \geq C \int_0^\infty\frac{\big|\{g>t\}\triangle\{g^\sharp>t\}\big|^2}{\mu(t)^{2/N}(-\mu^\prime(t))}\,dt \\
\nonumber\displaystyle & \geq \frac{C}{\int_0^\infty\mu(t)^{2/N}(-\mu^\prime(t))\,dt}\left(\int_0^\infty\big|\{g>t\}\triangle\{g^\sharp>t\}\big|\,dt\right)^2 \\
\label{lastineq} \displaystyle & \geq \frac{C}{|{\rm supp}\,g|^{\frac{N+2}{N}}}\left(\int_0^\infty\big|\{g>t\}\triangle\{g^\sharp>t\}\big|\,dt\right)^2\,,
\end{align}
where we have used Jensen's inequality, and $C$ still denotes a positive constant depending only on $N$, possibly changing from line to line.

Finally we estimate 
\begin{multline}\label{estil1diff}
\int_{\RR^N}|g-g^\sharp|\,dx  =\int_{\RR^N}\left|\int_0^\infty\chi_{\{g>t\}}(x)-\chi_{\{g^\sharp>t\}}(x)\,dt\right|\,dx \\
 \leq\int_0^\infty\!dt\!\int_{\RR^N}\left|\chi_{\{g>t\}}(x)-\chi_{\{g^\sharp>t\}}(x)\right|dx=\int_0^\infty\!\left|\{g>t\}\triangle\{g^\sharp>t\}\right|dt\,,
\end{multline}
and the conclusion follows gathering \eqref{lastineq} and \eqref{estil1diff}. 

If $|{\rm supp}\,g|=\infty$, for  $\varepsilon>0$ we set $g_\varepsilon=\max\{g,\varepsilon\}-\varepsilon$. Then, by the first part of the proof we have
\begin{align*}
\int_{\RR^N}|g_\varepsilon-g^\sharp_\varepsilon|\,dx &\leq C_N|{\rm supp}\,g_\varepsilon|^{\frac{N+2}{2N}}\left(\int_{\RR^N}|\nabla g_\varepsilon|^2\,dx-\int_{\RR^N}|\nabla g_\varepsilon^\sharp|^2\,dx\right)^{1/2} \\
& \leq C_N|{\rm supp}\,g_\varepsilon|^{\frac{N+2}{2N}}\left(\int_{\RR^N}|\nabla g|^2\,dx-\int_{\RR^N}|\nabla g^\sharp|^2\,dx\right)^{1/2},
\end{align*}
and the conclusion follows by letting $\varepsilon\to0$.
\end{proof}

In the proof of Proposition \ref{PSquant}, we have used the following simple lemma which is proved in \cite[Lemma 5.2]{M}.
\begin{lemma}\label{tre}
Let $E\subset \mathbb{R}^N$ be an $N$-symmetric Borel set of finite Lebesgue measure, with $|E|=|B_r|$. Then, 
$$
A(E)\leq\frac{|E\triangle B_r|}{|B_r|}\leq 3A(E)\,.
$$
\end{lemma}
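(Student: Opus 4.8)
This is Lemma~\ref{tre}, the elementary comparison between the Fraenkel asymmetry and the symmetric-difference distance to the \emph{centered} ball, valid for $N$-symmetric sets. The left inequality is immediate from the definition of $A(E)$: since $B_r$ itself is an admissible competitor in the minimization defining $A(E)$ (with $x=0$), we get $A(E)\le |E\triangle B_r|/|E|=|E\triangle B_r|/|B_r|$. All the content is in the right inequality: for an $N$-symmetric set we must control $|E\triangle B_r|$ by $3|B_r|\,A(E)$, i.e. we cannot lose more than a factor $3$ by being forced to center the ball at the origin rather than at the optimal point.

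\textbf{Proof of the right inequality.} Let $x\in\RR^N$ and $r>0$ with $r^N|B|=|E|$ realize (or nearly realize) the minimum in $A(E)$, so $|E\triangle B_r(x)|=|E|\,A(E)=|B_r|\,A(E)$. The idea is to symmetrize the competitor ball $B_r(x)$ back to $B_r=B_r(0)$ using the $N$ reflections $\sigma_i$ across the coordinate hyperplanes, and use that $E$ is invariant under each $\sigma_i$. First I would show by a triangle-inequality argument that reflecting across one hyperplane at a time does not increase the symmetric difference with $E$ too much. Concretely, for a reflection $\sigma=\sigma_i$ one has $\sigma(E)=E$, hence
$$|E\triangle \sigma(B_r(x))| = |\sigma(E)\triangle \sigma(B_r(x))| = |E\triangle B_r(x)|,$$
so reflecting the competitor costs \emph{nothing}. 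Now the centered ball $B_r$ can be reached from $B_r(x)$ by at most $N$ such reflections composed with the observation that the intersection/average of the reflected balls is comparable to $B_r$; more precisely, set $x=(x_1,\dots,x_N)$ and note $B_r(x)$ and $B_r(\sigma_i x)$ are two balls of radius $r$ whose centers differ by $2|x_i|$ in the $i$-th coordinate. Iterating, the $2^N$ balls $B_r(\varepsilon_1|x_1|,\dots,\varepsilon_N|x_N|)$, $\varepsilon\in\{\pm1\}^N$, all have the same symmetric difference with $E$. Intersecting, $\bigcap_\varepsilon B_r(\varepsilon\cdot|x|) \subset B_r(x)$ after translation, and one checks $B_{r'}\subset \bigcap_\varepsilon B_r(\varepsilon\cdot|x|)$ for a suitable $r'$; the cleanest route is:
$$|B_r\triangle B_r(x)| \le \text{(something like) } 2|B_r|\,A(E),$$
obtained by observing that if $|E\triangle B_r(x)|$ is small then $|E\cap B_r(x)|$ is close to $|B_r|$, and by $N$-symmetry of $E$ the center $x$ cannot be too far from the origin — quantitatively $|x|\le C r A(E)^{1/N}$ is false in general, so instead one argues directly: $|B_r\triangle B_r(x)|\le |B_r\triangle E|+|E\triangle B_r(x)|$, and bound $|B_r\triangle E|$ by reflecting. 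Using $\sigma_i(E)=E$ and the identity $|A\triangle C|\le |A\triangle \sigma(C)| + |\sigma(C)\triangle C|$ applied with $A=E$, $C=B_r(x)$, $\sigma=\sigma_i$, one gets $|E\triangle \sigma_i(B_r(x))| \le |E\triangle B_r(x)| + |B_r(x)\triangle \sigma_i(B_r(x))|$, but in fact equality $|E\triangle\sigma_i(B_r(x))|=|E\triangle B_r(x)|$ holds as above. Then
$$|E\triangle B_r| \le |E\triangle B_r(x)| + |B_r(x)\triangle B_r| \le 2|E\triangle B_r(x)| + |E\triangle B_r|\cdot(\text{recursion}),$$
which is circular; the honest argument is the geometric one below.

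\textbf{The geometric step (main obstacle).} The real point, and the place to be careful, is the purely geometric estimate: for two balls $B_r(x)$, $B_r(y)$ of equal radius, $|B_r(x)\triangle B_r(y)| \le 2|B_r\triangle B_r(z)|$ whenever $z$ lies ``between'' $x$ and $y$ in a suitable sense, together with the fact that for an $N$-symmetric $E$ the optimal center must satisfy $|B_r(x)\triangle B_r| \le |E\triangle B_r(x)| \cdot(\text{const})$. I would handle this by: (1) reduce to showing $|B_r\triangle B_r(x)|\le 2|E|\,A(E)$; (2) since $E=\sigma_i E$, we have $\chi_E = \frac{1}{2}(\chi_E + \chi_E\circ\sigma_i)$, so $\|\chi_E-\chi_{B_r(x)}\|_{L^1} = \|\chi_E - \frac12(\chi_{B_r(x)}+\chi_{B_r(\sigma_i x)})\circ(\dots)\|$ — more usefully, averaging over the group $G=\{\pm1\}^N$ of $N$ reflections, $\chi_E = \frac{1}{|G|}\sum_{\sigma\in G}\chi_E\circ\sigma$, so
$$|E| A(E) = \|\chi_E - \chi_{B_r(x)}\|_{L^1} = \Big\|\frac{1}{|G|}\sum_{\sigma\in G}(\chi_E - \chi_{B_r(x)})\circ\sigma\Big\|_{L^1} \ge \Big\|\chi_E - \frac{1}{|G|}\sum_{\sigma\in G}\chi_{\sigma B_r(x)}\Big\|_{L^1}.$$
The function $f:=\frac{1}{|G|}\sum_\sigma \chi_{\sigma B_r(x)}$ is $N$-symmetric with $\int f = |B_r|$, and one shows $\|\chi_{B_r}-f\|_{L^1}\le 2\|\chi_E - f\|_{L^1}$ (using that $B_r$ is the centered ball of the same mass and a rearrangement/symmetry argument — e.g. $f\le 1$ and the level set $\{f\ge 1\}=\bigcap_\sigma \sigma B_r(x)$ is an $N$-symmetric convex set containing a centered ball of some radius $r'\le r$). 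Combining, $|E\triangle B_r|\le |E\triangle B_r(x)| + |B_r(x)\triangle B_r|$ and a short computation bounding each piece gives the clean factor $3$. I expect the factor-$3$ bookkeeping in this last comparison (choosing whether to route through $f$, through $\{f\ge1\}$, or directly through reflections of the optimal ball) to be the only genuinely fiddly part; the $N$-symmetry of $E$ is exactly what makes it work, since it forces the ``wasted distance'' from off-center competitors to be recoverable up to a bounded multiplicative constant independent of $N$.
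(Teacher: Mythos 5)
Your left inequality is correct and is exactly the right observation. The right inequality, however, is not actually proved in the proposal: you correctly note that reflecting the optimal competitor ball across a coordinate hyperplane leaves $|E\triangle B_r(x)|$ unchanged (because $\sigma_i E=E$), but you then explicitly label your own chain of triangle inequalities ``circular,'' switch to an averaging-over-the-reflection-group idea that is not carried through (the averaged function $f=\frac{1}{|G|}\sum_\sigma\chi_{\sigma B_r(x)}$ is no longer a characteristic function, so the comparison $\|\chi_{B_r}-f\|_{L^1}\le 2\|\chi_E-f\|_{L^1}$ is asserted via ``a rearrangement/symmetry argument'' and ``one checks'' rather than proved), and you end by saying you ``expect'' the factor-$3$ bookkeeping to work. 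That is a genuine gap, not a proof.

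The missing ingredient is one elementary geometric fact which you circle around but never isolate: for congruent balls, $|B_r(x)\triangle B_r(y)|$ depends only on $|x-y|$ and is nondecreasing in $|x-y|$ (the overlap can only shrink as the centers separate). With this, you need only the single symmetry $x\mapsto -x$, which lies in the symmetry group of every $N$-symmetric set since it is the composition of the $N$ coordinate reflections. Let $B_r(x_0)$ attain the minimum defining $A(E)$, so that $|E\triangle B_r(x_0)|=|B_r|\,A(E)$. Since $E=-E$,
\begin{equation*}
|B_r(x_0)\triangle B_r(-x_0)|\le |B_r(x_0)\triangle E|+|E\triangle B_r(-x_0)|=2\,|E\triangle B_r(x_0)|\,,
\end{equation*}
and since $|x_0-0|\le|x_0-(-x_0)|$, the monotonicity fact gives $|B_r(x_0)\triangle B_r|\le|B_r(x_0)\triangle B_r(-x_0)|$. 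Hence
\begin{equation*}
|E\triangle B_r|\le |E\triangle B_r(x_0)|+|B_r(x_0)\triangle B_r|\le 3\,|E\triangle B_r(x_0)|=3\,|B_r|\,A(E)\,.
\end{equation*}
The paper does not reprove this lemma but cites it from Maggi's survey \cite{M}; the three-line argument above is essentially that proof. Your $2^N$ iterated reflections and group averaging are far heavier than what is needed and, as written, do not terminate; the clean route is to state and use the monotonicity of $|B_r(x)\triangle B_r(y)|$ in $|x-y|$ as the one geometric lemma, rather than burying it under phrases like ``one checks'' and ``a suitable $r'$.''
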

\vskip5pt

We continue by showing that the Dirichlet type energy in \eqref{SDirichenerg} decreases under ``horizontal" symmetric rearrangement. 
More precisely, we have the following result. 

\begin{lemma}\label{Morini}
Let $s\in(0,1)$ and $u\in \mathcal{W}^{1,2}_s(\RR^{N+1}_+)$ be a nonnegative function such that $u(\cdot,z)$ is measurable and satisfies \eqref{livfin} for every $z\in(0,\infty)\setminus N$ 
for a (possibly empty) set $N$ of vanishing Lebesgue measure. Let $u^*:\RR^{N+1}_+\to [0,\infty)$ be the function defined by  
$$
u^*(x,z):=\left(u(\cdot,z)\right)^\sharp(x)\qquad \text{ for every $z\in (0,+\infty)\setminus N$ and $x\in\RR^N$}.
$$
Then  $u^*\in \mathcal{W}^{1,2}_s(\RR^{N+1}_+)$,
\begin{equation}\label{VertMorini}
\int_{\RR^{N+1}_+}z^{1-s}|\partial_z u|^2 \,dxdz\geq  \int_{\RR^{N+1}_+}z^{1-s}|\partial_z u^*|^2 \,dxdz\,,
\end{equation}
and 
\begin{equation}\label{HorizMorini}
\int_{\RR^{N+1}_+}z^{1-s}|\nabla_x u|^2 \,dxdz\geq  \int_{\RR^{N+1}_+}z^{1-s}|\nabla_x u^*|^2 \,dxdz\,.
\end{equation}
\end{lemma}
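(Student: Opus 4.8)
The plan is to deduce both inequalities from one‑dimensional (in the variable $z$) statements about the slices $u(\cdot,z)$, and then to recover $u^*\in\mathcal{W}^{1,2}_s(\RR^{N+1}_+)$ by a difference‑quotient argument. As a preliminary, one checks that $u^*$ is jointly measurable on $\RR^{N+1}_+$: by Tonelli the map $(t,z)\mapsto\mu_z(t):=\big|\{u(\cdot,z)>t\}\big|$ is measurable, and $u^*(x,z)=\sup\{t>0:\mu_z(t)>|B|\,|x|^N\}$, with $\sup\emptyset:=0$. For the horizontal inequality \eqref{HorizMorini} I would argue slicewise: by Fubini, for a.e. $z>0$ the slice $u(\cdot,z)$ belongs to $W^{1,1}_{\rm loc}(\RR^N)$ with $\nabla_x u(\cdot,z)\in L^2(\RR^N)$, is nonnegative, and satisfies \eqref{livfin}; hence the P\'olya--Szeg\"o inequality \eqref{stdPolSze} gives $u^*(\cdot,z)=u(\cdot,z)^\sharp\in W^{1,1}_{\rm loc}(\RR^N)$ and $\int_{\RR^N}|\nabla_x u^*(\cdot,z)|^2\,dx\leq\int_{\RR^N}|\nabla_x u(\cdot,z)|^2\,dx$. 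Multiplying by the $x$‑independent weight $z^{1-s}$ and integrating in $z$ would yield \eqref{HorizMorini}, provided the slicewise gradients $\nabla_x u^*(\cdot,z)$ assemble to the genuine weak $x$‑gradient of $u^*$ on $\RR^{N+1}_+$ — a point I postpone to the last step.

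The heart of the matter is the vertical inequality \eqref{VertMorini}, which I would obtain from difference quotients in $z$. Write $\delta_h v(x,z):=\big(v(x,z+h)-v(x,z)\big)/h$. The key analytic input is the non‑expansivity of symmetric rearrangement in $L^2(\RR^N)$: for nonnegative $f,g$ satisfying \eqref{livfin}, expanding the square and using the equimeasurability $\|f^\sharp\|_{L^2}=\|f\|_{L^2}$ together with the Hardy--Littlewood inequality $\int_{\RR^N}fg\,dx\leq\int_{\RR^N}f^\sharp g^\sharp\,dx$ gives $\|f^\sharp-g^\sharp\|_{L^2(\RR^N)}\leq\|f-g\|_{L^2(\RR^N)}$ (a classical fact; see, e.g., \cite{AL}). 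Applying this with $f=u(\cdot,z+h)$ and $g=u(\cdot,z)$ (legitimate for a.e. $z$) yields $\int_{\RR^N}|\delta_h u^*(\cdot,z)|^2\,dx\leq\int_{\RR^N}|\delta_h u(\cdot,z)|^2\,dx$ for a.e. $z>0$ and every $h>0$. To control the right‑hand side I would use $v(x,z+h)-v(x,z)=\int_z^{z+h}\partial_z v(x,\tau)\,d\tau$ for the precise representative, Cauchy--Schwarz, Fubini, and the monotonicity of $z\mapsto z^{1-s}$ — valid precisely because $1-s>0$, which is where the restriction $s<1$ is used in an essential way: since $\int_{(\tau-h)_+}^{\tau}z^{1-s}\,dz\leq h\,\tau^{1-s}$, one gets, for every $h>0$,
$$
\int_{\RR^{N+1}_+}z^{1-s}|\delta_h u^*|^2\,dxdz\leq\int_{\RR^{N+1}_+}z^{1-s}|\delta_h u|^2\,dxdz\leq\int_{\RR^{N+1}_+}z^{1-s}|\partial_z u|^2\,dxdz\,.
$$

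Finally I would let $h\downarrow0$. On any box $Q'\times[a,b]$ with $0<a<b$ the weight $z^{1-s}$ is bounded below by $a^{1-s}>0$, so the displayed bound (for $z$‑differences) together with the slicewise P\'olya--Szeg\"o estimate (for $x_i$‑differences) shows that the difference quotients of $u^*$ in every coordinate direction are bounded in $L^2(Q'\times[a,b])$ uniformly in $h$; by the classical difference‑quotient criterion $u^*\in W^{1,2}_{\rm loc}(\RR^{N+1}_+)\subset W^{1,1}_{\rm loc}(\RR^{N+1}_+)$, its weak $x$‑gradient coincides a.e. with the slicewise gradient (so \eqref{HorizMorini} is now justified), and $\delta_h u^*\to\partial_z u^*$ in $L^2_{\rm loc}(\RR^{N+1}_+)$. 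Since $\{\delta_h u^*\}$ is moreover bounded in the Hilbert space $L^2(\RR^{N+1}_+;z^{1-s}\,dxdz)$, a subsequence converges weakly therein, necessarily to $\partial_z u^*$, and weak lower semicontinuity of the norm gives $\int_{\RR^{N+1}_+}z^{1-s}|\partial_z u^*|^2\,dxdz\leq\liminf_{h\downarrow0}\int_{\RR^{N+1}_+}z^{1-s}|\delta_h u^*|^2\,dxdz\leq\int_{\RR^{N+1}_+}z^{1-s}|\partial_z u|^2\,dxdz$, which is \eqref{VertMorini}; combined with \eqref{HorizMorini} this also shows $u^*\in\mathcal{W}^{1,2}_s(\RR^{N+1}_+)$. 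I expect the main obstacle to be exactly this vertical step: marrying the difference‑quotient/rearrangement‑contraction argument to the degenerate, $z$‑dependent weight, and checking that the slicewise objects genuinely glue into a Sobolev function on the half‑space — the rearrangement inequalities themselves being classical.
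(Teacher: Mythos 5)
Your proof is correct but takes a genuinely different route from the paper's. The paper establishes the vertical inequality \eqref{VertMorini} by approximating $u^*$ through an infinite iteration of Steiner symmetrizations in a dense sequence of horizontal directions $\{\nu_k\}\subset\mathbb{S}^{N-1}\times\{0\}$, invoking Brock's weighted Dirichlet-type inequality for Steiner symmetrization \cite{Brocco} to obtain monotonicity of $\int z^{1-s}|\partial_z u_k|^2$ along the iteration, passing to a weak limit (equibounded by the slicewise P\'olya--Szeg\"o inequality), identifying the limit with $u^*$ through equimeasurability and full rotational symmetry, and finally removing a preliminary smoothness assumption on the slices by approximation. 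You instead reduce \eqref{VertMorini} to the $L^2$-nonexpansivity of symmetric decreasing rearrangement applied to pairs of neighbouring slices, propagate this to the weighted difference quotient via Fubini and the elementary bound $\int_{(\tau-h)_+}^{\tau}z^{1-s}\,dz\leq h\,\tau^{1-s}$ (this is where $1-s>0$ enters), and conclude by difference-quotient compactness and weak lower semicontinuity of the weighted norm. Your route is more direct: it bypasses Brock's theorem and the iterated-Steiner limiting procedure, and needs no preliminary smoothing of $u$, at the cost of invoking a different but equally classical rearrangement input, namely the metric contraction $\|f^\sharp-g^\sharp\|_{L^p}\leq\|f-g\|_{L^p}$.

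Two points should be tightened. First, the ``expand the square'' derivation of $\|f^\sharp-g^\sharp\|_{L^2}\leq\|f-g\|_{L^2}$ requires $f,g\in L^2(\RR^N)$ individually, which is not guaranteed for the slices $u(\cdot,z)$ (only their superlevel sets have finite measure). The correct statement, valid whenever $f,g\geq 0$ have superlevel sets of finite measure and $f-g\in L^2$ (which your Fubini estimate does give for the pair $u(\cdot,z+h),u(\cdot,z)$ for a.e.\ $z$), holds but needs the general rearrangement-contraction theorem (Chiti, Crandall--Tartar, or \cite{AL}) rather than the bare Hardy--Littlewood computation; since you cite \cite{AL} this is essentially a matter of phrasing. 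Second, the difference-quotient criterion presupposes $u^*\in L^1_{\rm loc}(\RR^{N+1}_+)$, which is not entirely immediate from the slicewise definition. It can be obtained, for instance, by noting that your weighted $z$-difference bound makes $z\mapsto u^*(\cdot,z)$ locally H\"older continuous into $L^2(\RR^N)$ on each interval $[a,b]\subset(0,\infty)$, so that $u^*\in L^2_{\rm loc}(\RR^{N+1}_+)$. With these two remarks added, the argument is complete.
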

\begin{proof}
First, observe that inequality \eqref{HorizMorini} immediately  follows by applying P\'olya--Szeg\"o inequality to each function $u(\cdot,z)$.

To prove \eqref{VertMorini} we need to recall that, given a nonnegative measurable function $g:\RR^N\to[0,\infty)$ satisfying 
$|\{g>t\}|<\infty$ for all $t>0$ and $\nu\in\mathbb{S}^{N-1}$, the Steiner rearrangement of $g$ in the direction $\nu$ is the unique function $g^\nu$ 
such that $\{g^\nu>t\}$ is the Steiner symmetral in the direction $\nu$ of $\{g>t\}$ for all $t>0$. In turn, the Steiner symmetral $E^\nu$ in the direction $\nu$ 
of $E\subset\mathbb{R}^N$ is defined as follows. Assume for simplicity that $\nu=e_N$, and write $x\in\mathbb{R}^N$ as $x=(x',x_N)$ with  $x'\in \mathbb{R}^{N-1}$. 
Set for  $x'\in \mathbb{R}^{N-1}$, 
$$ E_{x'}:=\left\{t\in\mathbb{R}:\,(x',t)\in E\right\}\,,\quad \ell(x'):=\mathcal{L}^1(E_{x'})\,,$$
where $\mathcal{L}^1$ denotes the outer Lebesgue measure in $\mathbb{R}$, and 
$$\pi(E)^+:=\left\{x'\in \mathbb{R}^{N-1}:\,\ell(x')>0\right\}\,.$$
Then the symmetrized set $E^{e_N}$ is defined by 
$$E^{e_N}:=\left\{x\in\mathbb{R}^N:\, x'\in\pi(E)^+\,,\, |x_N|\leq \ell(x')/2 \right\}\,. $$

\vskip3pt

Let $u\in \mathcal{W}^{1,2}_s(\RR_+^{N+1})$  be a nonnegative function such that 
$u(\cdot, z)\in C^{\infty}_c(\RR^{N})$ for a.e. $z>0$. Given a sequence of directions $\{\nu_k\}\subset \mathbb{S}^{N-1}\times\{0\}$ 
dense in $\mathbb{S}^{N-1}\times\{0\}$, we define by induction the following sequence of iterated Steiner rearrangements:
$$
u_1:=u^{\nu_1},\qquad u_{k+1}:=(u_k)^{\nu_{k+1}}\,.
$$
From the P\'olya--Szeg\"o inequality for Steiner symmetrization, we infer that the sequence $\{u_k\}$ is equibounded in 
$W^{1,2}_{\rm loc}(\RR^{N+1}_+)$ and that for a.e. $z>0$,  
\begin{equation}\label{fettep>1}
 \text{the sequence  $\{u_k(\cdot, z)\}$ is equibounded in 
$W^{1,p}(\RR^N)$ for all $p\geq1$. }
\end{equation}
Therefore, up to a (not relabeled) subsequence, $u_k$ converges weakly in $W^{1,2}_{\rm loc}(\RR^{N+1}_+)$ to a function $v$ which is symmetric with respect 
to all directions $\nu_k$. From \eqref{fettep>1} we  have that for a.e. $z>0$, $v(\cdot, z)\in W^{1,p}(\RR^N)$ for all $p\geq1$. Hence,   by continuity, for all such $z$ it turns out that $v(\cdot, z)$ is symmetric with respect to all direction $\nu\in\mathbb{S}^{N-1}\times\{0\}$. By construction we have 
$$
\big|\{x\in\RR^N:\,u_k(x,z)>t\}\big|=\big|\{x\in\RR^N:\,u(x,z)>t\}\big|\,,\quad\text{for all $k\in\mathbb{N}$, $z\in\RR$, $t>0$}\,,
$$
which yields 
$$
\big|\{x\in\RR^N:\,v(x,z)>t\}\big|=\big|\{x\in\RR^N:\,u(x,z)>t\}\big|\,,\quad\text{for a.e. $z>0$, $t>0$}\,.
$$
Hence $v(\cdot,z)=(u(\cdot,z))^\sharp$ for a.e. $z>0$. Since (see {\it e.g.} \cite[Theorem 1]{Brocco}) for all $k\in\mathbb{N}$
$$
\int_{\RR^{N+1}_+}z^{1-s}|\partial_zu|^2\,dxdz\geq\int_{\RR^{N+1}_+}z^{1-s}|\partial_zu_k|^2\,dxdz\,,
$$
we deduce \eqref{VertMorini} by lower semicontinuity, letting $k\to\infty$ in the above inequality. The general case follows by approximating any nonnegative $u\in \mathcal{W}^{1,2}_s(\RR^{N+1}_+)$ as in the statement of the lemma with 
a sequence $\{u_n\}\subset \mathcal{W}^{1,2}_s(\RR^{N+1}_+)$ of  nonnegative functions such that 
$u_n(\cdot, z)\in C_c^{\infty}(\RR^N)$ for a.e. $z>0$, $u_n\to u$ in 
$W^{1,2}_{\rm loc}(\RR^{N+1}_+)$, and
$$
\int_{\RR^{N+1}_+}z^{1-s}|\partial_zu_n|^2\,dxdz\to \int_{\RR^{N+1}_+}z^{1-s}|\partial_zu|^2\,dxdz
$$
as $n\to\infty$.
\end{proof}

Applying the symmetrization procedure of Lemma~\ref{Morini} to the function $u_E$ defined by~\eqref{eqUE}, 
we find that $u^*_E\in \mathcal{W}^{1,2}_s(\RR^{N+1}_+)$, and that the exceptional set $N$ is  empty since $\{u_E(\cdot,z)>t\}$ is bounded for every $t>0$ and every $z>0$ by Remark~\ref{extensionrem}.  
We now check that the trace of $u^*_E$ on $\mathbb{R}^N$ coincides with the characteristic function of the symmetrized set. 

\begin{lemma}\label{trace}
For any Borel set $E\subset\RR^N$ of finite Lebesgue measure, $u^*_E\in \mathcal{W}^{1,2}_s(\RR^{N+1}_+)$ and $u_E^*(\cdot,0)=\chi_{B_r}$ with $r^N|B|=|E|$.  
\end{lemma}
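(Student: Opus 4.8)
The plan is to show two things: first that $u_E^* \in \mathcal{W}^{1,2}_s(\RR^{N+1}_+)$, and second that its trace on $\RR^N$ is $\chi_{B_r}$. The membership in the weighted Sobolev space is essentially immediate: by Remark~\ref{extensionrem} the function $u_E$ lies in $\mathcal{W}^{1,2}_s(\RR^{N+1}_+)$, is nonnegative, and satisfies $0 \le u_E \le 1$ with $\{u_E(\cdot,z)>t\}$ bounded for every $z>0$ and $t>0$; hence the hypotheses of Lemma~\ref{Morini} hold with exceptional set $N = \emptyset$, and that lemma gives directly $u_E^* \in \mathcal{W}^{1,2}_s(\RR^{N+1}_+)$ together with the two energy inequalities. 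So the real content is the trace identification.

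For the trace, the first step is to recall from Remark~\ref{extensionrem} that $u_E(x,z) \to 1$ as $z \downarrow 0$ at every point of density $1$ of $E$ and $u_E(x,z) \to 0$ at every point of density $0$. Since a set of finite measure has density $1$ at a.e.\ point of $E$ and density $0$ at a.e.\ point of $E^c$, this means $u_E(\cdot,z) \to \chi_E$ pointwise a.e.\ as $z \downarrow 0$. The next step is to transfer this convergence through the rearrangement. Because $0 \le u_E(\cdot,z) \le 1$ and $\{u_E(\cdot,z)>t\}$ has finite measure, dominated convergence gives $u_E(\cdot,z) \to \chi_E$ in $L^1(\RR^N)$ as $z \downarrow 0$; the rearrangement is an $L^1$-contraction, i.e.\ $\|f^\sharp - g^\sharp\|_{L^1} \le \|f-g\|_{L^1}$ (a standard property, equivalently derivable from the layer-cake formula together with $|\{f^\sharp>t\}\triangle\{g^\sharp>t\}| = \big||\{f>t\}| - |\{g>t\}|\big| \le |\{f>t\}\triangle\{g>t\}|$), so $u_E^*(\cdot,z) = (u_E(\cdot,z))^\sharp \to (\chi_E)^\sharp = \chi_{B_r}$ in $L^1(\RR^N)$ as $z \downarrow 0$, where $r^N|B| = |E|$ since rearrangement preserves measure of level sets.

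Finally I would identify this $L^1$-limit with the Sobolev trace. Since $u_E^* \in \mathcal{W}^{1,2}_s(\RR^{N+1}_+)$, it has a well-defined trace $u_E^*(\cdot,0) \in L^2_{\rm loc}(\RR^N)$ as noted after the definition of the space; one checks that the trace operator is compatible with $L^1_{\rm loc}$-convergence of the slices $u_E^*(\cdot,z)$ as $z \downarrow 0$ — concretely, for a fixed ball $B_R \subset \RR^N$ one has, using that $z \mapsto \int_{B_R} |u_E^*(x,z) - u_E^*(x,0)|\,dx$ is controlled by $\int_0^\delta\!\int_{B_R} |\partial_z u_E^*|\,dxdz$ and a Cauchy--Schwarz estimate against the weight $z^{1-s}$ (whose reciprocal $z^{s-1}$ is integrable near $0$ since $s<1$), that $u_E^*(\cdot,z) \to u_E^*(\cdot,0)$ in $L^1_{\rm loc}(\RR^N)$. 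Comparing the two limits forces $u_E^*(\cdot,0) = \chi_{B_r}$ a.e. The main obstacle is this last compatibility point — making rigorous that the two notions of boundary value (the a.e.-defined pointwise limit of slices inherited from $u_E$, versus the Sobolev trace of $u_E^*$) agree; everything else is routine once the density-point characterization of $u_E$'s boundary behaviour and the $L^1$-contractivity of $\sharp$ are in hand.
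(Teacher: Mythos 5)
The overall structure of your argument mirrors the paper's (membership in $\mathcal{W}^{1,2}_s$ from Lemma~\ref{Morini}, then identification of the trace via convergence of the slices $u_E(\cdot,z)$ and continuity of $\sharp$), but there is a genuine gap in the key convergence step. You assert that ``dominated convergence gives $u_E(\cdot,z)\to\chi_E$ in $L^1(\RR^N)$ as $z\downarrow 0$'' from the facts $0\le u_E\le 1$ and $|\{u_E(\cdot,z)>t\}|<\infty$. Neither of these supplies a fixed $L^1(\RR^N)$ dominating function: the bound $u_E\le 1$ is not integrable on $\RR^N$, and the finiteness of the superlevel sets is for each individual $z$ and $t$, not uniform in $z$. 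So the dominated convergence theorem, as invoked, does not apply, and without it the whole chain (pointwise a.e.\ convergence $\Rightarrow$ $L^1$ convergence $\Rightarrow$ $L^1$-contractivity of $\sharp$ $\Rightarrow$ $u_E^*(\cdot,z)\to\chi_{B_r}$) does not get off the ground. This is precisely the point the paper takes care of: using the fundamental theorem of calculus and Cauchy--Schwarz against the weight $z^{1-s}$, one derives the pointwise bound
$$
|u_E(x,z)|\le\chi_E(x)+\frac{z^{s/2}}{\sqrt s}\left(\int_0^\infty t^{1-s}|\nabla u_E(x,t)|^2\,dt\right)^{1/2},
$$
whose right-hand side is a single $L^2(\RR^N)$ function (uniform over, say, $z\in(0,1)$). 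That does legitimise dominated convergence, in $L^2$, and then the $L^2$-continuity of $f\mapsto f^\sharp$ concludes. Your route could be salvaged without the paper's estimate by observing from the Poisson formula and the normalization \eqref{value} that $\|u_E(\cdot,z)\|_{L^1(\RR^N)}=|E|$ for all $z>0$, and then invoking Scheff\'e's lemma (a.e.\ convergence of nonnegative functions plus convergence of their integrals implies $L^1$ convergence) in place of dominated convergence; together with the $L^1$-contractivity of $\sharp$, which you prove correctly, this would close the argument. Your final paragraph on matching the slice limit with the Sobolev trace of $u_E^*$ is essentially the same FTC--Cauchy--Schwarz computation the paper performs and is fine, though stated informally.
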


\begin{proof}
The first assertion directly follows from Lemma~\ref{Morini}.
 Fix now $\varepsilon>0$ and $z>\varepsilon$. Then for every $x\in\mathbb{R}^N$ we may estimate  
\begin{multline*}
\left|u_E(x,z)\right|\leq\left|u_E(x,\varepsilon)\right|+\int_\varepsilon^z\left|\partial_zu_E(x,t)\right|\,dt\\
\leq \left|u_E(x,\varepsilon)\right|+\frac{z^{s/2}}{\sqrt{s}}\left(\int_\varepsilon^zt^{1-s}\left|\partial_zu_E(x,t)\right|^2\,dt\right)^{1/2}\,.
\end{multline*}
Letting $\varepsilon\to 0+$ and recalling Remark~\ref{extensionrem}, we deduce 
\begin{equation}\label{as}
\left|u_E(x,z)\right|\leq
\chi_E(x)+\frac{z^{s/2}}{\sqrt{s}}\left(\int_0^{\infty}t^{1-s}\left|\nabla u_E(x,t)\right|^2\,dt\right)^{1/2}
\end{equation}
for a.e. $x\in\RR^N$.
Since the function on the right-hand side of \eqref{as} belongs to $L^2(\RR^N)$, recalling again Remark~\ref{extensionrem},
by the Dominated Convergence Theorem we 
have $u_E(\cdot, z)\to \chi_E$ in $L^2(\RR^N)$ as $z\to 0^+$. 
 Recall now that  the map
$f\mapsto f^\sharp$ is continuous  in  $L^2(\RR^N)$. Hence,  we may conclude that 
$u^*_E(\cdot, z)=(u_E(\cdot, z))^\sharp\to \chi_E^\sharp=\chi_{B_r}$ in $L^2(\RR^N)$ as $z\to 0^+$, which finishes  the proof of the lemma.
\end{proof}

\section{Proof of Theorem \ref{main}}                        

As in the case of the quantitative isoperimetric inequality for the standard perimeter proved in \cite{FMP}, the strategy consists in reducing the proof of \eqref{QIPI} to the case of 
$N$-symmetric sets, {\it i.e.}, sets symmetric with respect to the $N$ coordinate hyperplanes. To this aim, 
we start by proving  the following continuity lemma which is needed in the proof of Proposition~\ref{Nsymmetry}.

\begin{lemma}\label{continuity}
For every $\varepsilon>0$ there exists $\delta>0$ such that if $E\subset \RR^N$ is a Borel set of finite Lebesgue measure satisfying $A(E)\leq3/2$ and 
$D_s(E)\leq \delta$, then $A(E)\leq \varepsilon$.
\end{lemma}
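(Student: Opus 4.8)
The natural approach is by contradiction via a compactness argument. Suppose the claim fails: then there is an $\varepsilon_0>0$ and a sequence of Borel sets $E_n\subset\RR^N$ with $A(E_n)\le 3/2$, $D_s(E_n)\to 0$, but $A(E_n)>\varepsilon_0$ for all $n$. Since $A$ and $D_s$ are scaling and translation invariant, I may normalize so that $|E_n|=|B|$ (the unit ball), and, using Lemma~\ref{trace} together with Remark~\ref{extensionrem}, I pass to the harmonic extensions $u_{E_n}\in\mathcal{W}^{1,2}_s(\RR^{N+1}_+)$ solving \eqref{eqUE}. Because $D_s(E_n)\to 0$, the energies $\int_{\RR^{N+1}_+}z^{1-s}|\nabla u_{E_n}|^2\,dxdz$ are bounded (they converge to $\tfrac{2}{\gamma_{N,s}}P_s(B)$), so up to a subsequence $u_{E_n}\weak u$ weakly in $\mathcal{W}^{1,2}_s(\RR^{N+1}_+)$ (more precisely weakly in $W^{1,2}_{\rm loc}(\RR^{N+1}_+)$ with the weighted gradient bound passing to the limit by lower semicontinuity).

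\textbf{Identifying the limit.} The key point is that the limit $u$ must be (the extension of) the characteristic function of a unit ball. To see this I would symmetrize: let $u^*_{E_n}$ be the horizontal rearrangement from Lemma~\ref{Morini}. By \eqref{VertMorini}, \eqref{HorizMorini} one has $\int z^{1-s}|\nabla u^*_{E_n}|^2\le\int z^{1-s}|\nabla u_{E_n}|^2$; but by Lemma~\ref{trace} the trace of $u^*_{E_n}$ is $\chi_{B}$ (same measure, centered at origin), and by the variational characterization in Lemma~\ref{extension} the function $u_B$ minimizes the weighted Dirichlet energy among all extensions with trace $\chi_B$. Hence
$$\frac{2}{\gamma_{N,s}}P_s(B)=\int_{\RR^{N+1}_+}z^{1-s}|\nabla u_B|^2\le\int_{\RR^{N+1}_+}z^{1-s}|\nabla u^*_{E_n}|^2\le\int_{\RR^{N+1}_+}z^{1-s}|\nabla u_{E_n}|^2=\frac{2}{\gamma_{N,s}}P_s(E_n)\,,$$
and since the outer terms both tend to $\tfrac{2}{\gamma_{N,s}}P_s(B)$, all the inequalities become asymptotic equalities. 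In particular $\int z^{1-s}|\nabla u_{E_n}|^2-\int z^{1-s}|\nabla u^*_{E_n}|^2\to 0$.

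\textbf{From energy convergence to $L^1$ convergence of the sets.} Now I apply the quantitative P\'olya--Szeg\"o estimate of Proposition~\ref{PSquant} slicewise: for a.e.\ fixed $z>0$ the function $u_{E_n}(\cdot,z)$, after a preliminary reduction to $N$-symmetric competitors (which is harmless by the standard first step, replacing $E_n$ by an $N$-symmetrization with no larger deficit — this is exactly the content of Proposition~\ref{Nsymmetry}, which I am allowed to invoke since the present lemma is only \emph{used} in its proof; alternatively one argues directly with Steiner symmetrizations as in Lemma~\ref{Morini}), satisfies
$$\int_{\RR^N}|u_{E_n}(\cdot,z)-u^*_{E_n}(\cdot,z)|\,dx\le C_N\,|\mathrm{supp}\,u_{E_n}(\cdot,z)|^{\frac{N+2}{2N}}\Big(\int_{\RR^N}|\nabla_x u_{E_n}(\cdot,z)|^2-|\nabla_x u^*_{E_n}(\cdot,z)|^2\,dx\Big)^{1/2}.$$
Multiplying by $z^{1-s}$, integrating in $z$, using the support bound coming from \eqref{as} and \eqref{poisson} (which gives a uniform-in-$n$ decay of $u_{E_n}(x,z)$ in $x$, hence control of the measure of the super-level sets) and Cauchy--Schwarz in $z$, the right-hand side is bounded by $C\big(\int z^{1-s}|\nabla_x u_{E_n}|^2-|\nabla_x u^*_{E_n}|^2\big)^{1/2}\to 0$. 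Therefore $u_{E_n}-u^*_{E_n}\to 0$ in (an appropriate weighted) $L^1(\RR^{N+1}_+)$, and combined with the weak convergence and lower semicontinuity one concludes $u=u_B$ and, passing to traces, $\chi_{E_n}\to\chi_B$ in $L^1(\RR^N)$ (at least along the subsequence). Hence $A(E_n)\le |E_n\triangle B|\to 0$, contradicting $A(E_n)>\varepsilon_0$.

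\textbf{Main obstacle.} The delicate point is making the slicewise argument rigorous: one needs a uniform (in $n$) bound on $|\mathrm{supp}\,u_{E_n}(\cdot,z)|$ — or more precisely on the measure of $\{u_{E_n}(\cdot,z)>t\}$ integrated suitably against $z^{1-s}$ and $t$ — so that the constant in Proposition~\ref{PSquant} does not blow up. This is exactly where the explicit Poisson representation \eqref{poisson} and the pointwise bound \eqref{as} are essential, since they give quantitative decay of $u_{E_n}(x,z)$ as $|x|\to\infty$ depending only on $z$, $N$, $s$ and $|E_n|=|B|$. A second, more technical nuisance is the transition between $L^1$ convergence of the slices and of the traces $\chi_{E_n}$: one invokes the trace estimate \eqref{as} to dominate and lets $z\downarrow 0$, as in the proof of Lemma~\ref{trace}. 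Everything else is soft functional analysis.
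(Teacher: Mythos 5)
Your proposal has a genuine circularity that breaks the argument. You write that in order to apply the quantitative P\'olya--Szeg\"o estimate of Proposition~\ref{PSquant} slicewise (which requires $N$-symmetric competitors), you will first reduce to $N$-symmetric sets by invoking Proposition~\ref{Nsymmetry}, ``which I am allowed to invoke since the present lemma is only used in its proof.'' That is exactly backwards: since Lemma~\ref{continuity} is an ingredient in the proof of Proposition~\ref{Nsymmetry}, you cannot use Proposition~\ref{Nsymmetry} to prove Lemma~\ref{continuity}. The alternative you suggest in passing --- ``argue directly with Steiner symmetrizations as in Lemma~\ref{Morini}'' --- does not repair this, because Lemma~\ref{Morini} gives only the \emph{qualitative} monotonicity of the weighted Dirichlet energy under rearrangement; Proposition~\ref{PSquant} is the only quantitative P\'olya--Szeg\"o estimate in the paper, and it genuinely needs $N$-symmetry to convert the FMP isoperimetric deficit into the symmetric difference $|\{g>t\}\triangle\{g^\sharp>t\}|$ via Lemma~\ref{tre}. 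Without $N$-symmetry the slicewise estimate only controls $\min_x\int|u_{E_n}(\cdot,z)-\tau_x u^*_{E_n}(\cdot,z)|\,dx$ with a translation $\tau_x$ that can depend on $z$ and $n$, and you have no way to match these translations across slices or to pass them to the trace.

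Once the $N$-symmetry reduction is removed, your argument has no mechanism to rule out loss of compactness. Extracting a weak $W^{1,2}_{\rm loc}$ limit $u$ of $u_{E_n}$ and invoking lower semicontinuity tells you nothing if mass escapes to infinity or if $E_n$ splits into two pieces drifting apart; in those scenarios the weak limit can be zero or can correspond to a set of measure strictly less than $|B|$, and no contradiction is reached. This is precisely the difficulty the paper addresses by invoking Lions' concentration--compactness lemma and treating the three alternatives separately: tightness (where lower semicontinuity of $P_s$ and the Frank--Seiringer rigidity finish the argument), vanishing (ruled out directly by $A(E_n)\le 3/2$), and dichotomy (ruled out by a truncation-of-the-kernel argument combined with the rearrangement inequality of Frank--Seiringer and the strict concavity of $t\mapsto t^{(N-s)/N}$, which makes $P_s$ strictly superadditive on far-apart pieces of equal total measure). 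None of this machinery appears in your plan, and the dichotomy case in particular is not a ``technical nuisance'' but the heart of the lemma. I would also flag, as a smaller point, that $u_{E_n}(\cdot,z)$ has full support for every $z>0$ by the Poisson formula \eqref{poisson}; the object with bounded support is the truncation $v_{E_n}(\cdot,z)=(u_{E_n}(\cdot,z)-\tfrac12)^+$, so the support bound you appeal to must be phrased in terms of super-level sets, as in \eqref{estisupp} in the proof of Theorem~\ref{main}.
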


\begin{proof}
We argue by contradiction assuming that there exists a sequence of Borel sets $E_n\subset \RR^N$ such  that $|E_n|=|B|$, $A(E_n)\leq 3/2$, and 
$$D_s(E_n)\to 0 \quad\text{with} \quad A(E_n)\geq \varepsilon\,,$$ 
for some $\varepsilon>0$. We now apply the concentration-compactness Lemma I.1 of \cite{L} to deduce that there exists a (not relabeled) subsequence $\{E_{n}\}$ such that the following three possible cases may occur:
\vskip5pt
\noindent{\it (i)} (up to translations) the sets $\{E_n\}$ have the property that for every $\delta>0$ there exists $R_\delta>0$ such that $|E_n\cap B_{R_\delta}|\geq |B|-\delta$ for all $n$; 
\vskip5pt
\noindent{\it (ii)} for all $R>0$, $\displaystyle\sup_{x\in\RR^N}|E_n\cap B_{R}(x)|\to 0$ as $n\to+\infty$; 
\vskip5pt
\noindent{\it (iii)} there exists $\lambda\in(0,|B|)$ such that for all $\sigma>0$, there exist $n_0\in\NN$, $E^1_n\subset E_n$, and $E^2_n\subset E_n$ satisfying for all $n\geq n_0$, 
$$\begin{cases} 
\big|E_n\setminus(E^1_n\cup E_n^2)\big|<\sigma\,,\quad \big||E_n^1|-\lambda\big|<\sigma \,,\quad \big||E_n^2|-(|B|-\lambda)\big|<\sigma\,,\\
{\rm dist}(E_n^1,E_n^2)\to +\infty\,.
\end{cases}
$$
Notice that though  Lemma I.1 in \cite{L} is stated in a  seemingly  different form, a quick inspection of the proof shows  that it is in fact equivalent to the above statement.
\vskip3pt

We analyse separately the three cases. 
\vskip3pt

\noindent{\it Case (i).} By the compact embedding of $H^{s/2}(\RR^N)$ into $L^1_{\rm loc}(\RR^N)$, up to a subsequence, there exists a set $F$ such that $\chi_{E_n}\to \chi_{F}$ 
in $L^1_{\rm loc}(\RR^N)$. Hence, for every $\delta>0$ there exists $R_\delta$ such that $|F\cap B_{R_\delta}|>|B|-\delta$, and thus $|F|=|B|$. 
By the assumption $D_s(E_n)\to0$ and the lower semicontinuity of the $s$-perimeter, we infer that $D_s(F)=0$, {\it i.e.}, $F$ is a ball of radius $1$ by the characterisation of 
the equality cases in~\eqref{fracisopbis} proved in \cite{FS}.  
Hence $A(E_n)\leq |B|^{-1}|E_n\triangle F|\to 0$, which contradicts $A(E_n)\geq \varepsilon$ for all $n$. 

\vskip3pt

\noindent{\it Case (ii).} We  observe that this case can not occur since the assumption $A(E_n)\leq3/2$ implies that, up to suitable translation of each $E_n$, 
$|E_n\triangle B|\leq 3|B|/2$. In particular we have $|E_n\cap B|\geq |B|/4$ for every $n$.

\vskip3pt

\noindent{\it Case (iii).} Let us fix an arbitrary constant $\eta>0$. We introduce the regularized kernel $K_\eta:\mathbb{R}^N\times\mathbb{R}^N\to [0,\infty)$ defined by
$$K_{\eta}(x,y):=\begin{cases}
\eta^{-(N+s)} & \text{if } |x-y|<\eta \,,\\[4pt]
\displaystyle \frac{1}{|x-y|^{N+s}} & \text{if } \eta\leq |x-y| \leq \eta^{-1}\,,\\[8pt]
0 & \text{otherwise}\,.
\end{cases}
 $$
We observe that 
\begin{align}
\nonumber P_s(E_n)& \geq \int_{E_n}\int_{E_n^c}K_\eta(x,y)\,dxdy\\
\nonumber &\geq \int_{E^1_n}\int_{E_n^c}K_\eta(x,y)\,dxdy +\int_{E^2_n}\int_{E_n^c}K_\eta(x,y)\,dxdy\\
\label{continuity10}& \geq \int_{E^1_n}\int_{(E^1_n)^c}K_\eta(x,y)\,dxdy +\int_{E^2_n}\int_{(E^2_n)^c}K_\eta(x,y)\,dxdy -\mathscr{R}^1_n-\mathscr{R}^2_n\,, 
\end{align}
where for $i=1,2$, 
$$\mathscr{R}^i_n:=\int_{E^i_n}\int_{E_n\setminus E_n^i} K_\eta(x,y)\,dxdy\,.$$
Since $K_\eta(x,y)=0$ whenever $|x-y|>\eta^{-1}$ and ${\rm dist}(E_n^1,E_n^2)\to +\infty$, we have for $n$ large enough
\begin{equation}\label{continuity11}
\mathscr{R}^i_n=\int_{E^i_n}\int_{E_n\setminus (E_n^1\cup E_n^2)} K_\eta(x,y)\,dxdy\leq \frac{|B|\sigma}{\eta^{N+s}}\,. 
\end{equation}
On the other hand, by Lemma A.2 in \cite{FS}, we have 
\begin{multline*}
\int_{E^i_n}\int_{(E^i_n)^c}K_\eta(x,y)\,dxdy = \frac{1}{2}
\int_{\RR^N}\int_{\RR^N}|\chi_{E^i_n}(x)-\chi_{E^i_n}(y)|K_\eta(x,y)\,dxdy \\
\geq \frac{1}{2}\int_{\RR^N}\int_{\RR^N}|\chi_{B_{r^i_n}}(x)-\chi_{B_{r^i_n}}(y)|K_\eta(x,y)\,dxdy= \int_{B_{r^i_n}}\int_{(B_{r^i_n})^c}K_\eta(x,y)\,dxdy\,,
\end{multline*}
where $(r^i_n)^N|B|=|E_n^i|$. From this last inequality, \eqref{continuity10}, \eqref{continuity11}, and the assumption $D_s(E_n)\to0$, letting $n\to+\infty$ and 
then $\sigma\to 0$, we deduce that
\begin{equation}\label{esticonc}
P_s(B)\geq \int_{B_{r^1}}\int_{(B_{r^1})^c}K_\eta(x,y)\,dxdy + \int_{B_{r^2}}\int_{(B_{r^2})^c}K_\eta(x,y)\,dxdy\,,
\end{equation}
where $(r^1)^N|B|=\lambda$ and $(r^2)^N|B|=|B|-\lambda$. 

Finally, letting $\eta\to 0$ in \eqref{esticonc}, we conclude that 
$$P_s(B)\geq P_s(B_{r^1})+ P_s(B_{r^2})=\left[\bigg(\frac{\lambda}{|B|}\bigg)^{(N-s)/N}+\bigg(1-\frac{\lambda}{|B|}\bigg)^{(N-s)/N}\right] P_s(B)\,,$$
which is impossible by strict concavity. 
\vskip3pt

\end{proof}

The following proposition shows that we can reduce the proof of \eqref{QIPI} to the case of $N$-symmetric sets. 
Its proof is almost entirely similar to the proof of Theorem~2.1 in \cite{FMP} except for a few changes indicated below.
 
\begin{proposition}\label{Nsymmetry}
There exists a constant $C_{N,s}>0$, depending only on $N$ and $s$, such that for every Borel set $E\subset \RR^N$ of finite Lebesgue measure 
there is a $N$-symmetric Borel set $F\subset \RR^N$ 
satisfying $|E|=|F|$, $A(E)\leq C_{N,s}A(F)$, and $D_s(F)\leq 2^N D_s(E)$. 
\end{proposition}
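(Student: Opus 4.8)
The plan is to reproduce the proof of Theorem~2.1 of \cite{FMP} almost verbatim, the only substantive changes being the use of the Caffarelli--Silvestre extension of Lemma~\ref{extension} to carry the monotonicity of the perimeter over to $P_s$, and the use of Lemmas~\ref{tre} and~\ref{continuity} in place of their classical analogues. By the scaling invariance of $A$ and $D_s$ I may assume $|E|=|B|$. If $P_s(E)=+\infty$ then $D_s(E)=+\infty$ and there is nothing to prove (take $F$ any fixed $N$-symmetric set with $|F|=|B|$ and $A(F)>0$), so I assume $P_s(E)<\infty$ and take $u_E$ as in Remark~\ref{extensionrem}. The set $F$ will be built by $N$ successive one-step symmetrisations: the $(k+1)$-st step replaces a set symmetric in the first $k$ coordinate hyperplanes by one symmetric in the first $k+1$, preserving the measure $|B|$, multiplying $D_s$ by at most $2$, and multiplying the asymmetry of the input set by at most a dimensional constant $C_N$; iterating $N$ times gives the proposition with $C_{N,s}=C_N^N$ (the $s$-dependence entering through Lemma~\ref{continuity}).

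\emph{The one step.} Given $E$ symmetric in the first $k$ coordinate hyperplanes, first translate $E$ along $e_{k+1}$ so that $H:=\{x_{k+1}=0\}$ bisects $E$, i.e.\ $|E\cap\{x_{k+1}>0\}|=|E\cap\{x_{k+1}<0\}|=|B|/2$; this is possible since $t\mapsto|E\cap\{x_{k+1}>t\}|$ is continuous and non-increasing with limits $|B|$ and $0$ at $\mp\infty$, and such a translation does not affect the first $k$ symmetries. Set $E^{\pm}:=E\cap\{\pm x_{k+1}>0\}$, let $\sigma$ be the reflection across $H$, and put $E_1:=E^+\cup\sigma E^+$ and $E_2:=E^-\cup\sigma E^-$; then $E_1,E_2$ are symmetric in the first $k+1$ coordinate hyperplanes and $|E_1|=|E_2|=|B|$. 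The one place where the nonlocality of $P_s$ forces a genuine departure from \cite{FMP}---where one simply glues the two relative half-perimeters---is the inequality
\[
P_s(E_1)+P_s(E_2)\ \le\ 2\,P_s(E),
\]
which I would prove through the extension. Let $w_i$ ($i=1,2$) be obtained by reflecting the restriction of $u_E$ to $\{\pm x_{k+1}>0\}\times(0,\infty)$ across $H$. Since $u_E\in C^\infty(\RR^{N+1}_+)$, each $w_i$ lies in $\mathcal{W}^{1,2}_s(\RR^{N+1}_+)$, has trace $\chi_{E_i}$ on $\RR^N$, and satisfies $\int_{\RR^{N+1}_+}z^{1-s}|\nabla w_i|^2\,dxdz=2\int_{\{\pm x_{k+1}>0\}\times(0,\infty)}z^{1-s}|\nabla u_E|^2\,dxdz$. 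By the minimality characterisation in Lemma~\ref{extension} (the extension of $\chi_{E_i}$ being produced by the direct method from these finite-energy competitors) one gets $P_s(E_i)\le\frac{\gamma_{N,s}}{2}\int_{\RR^{N+1}_+}z^{1-s}|\nabla w_i|^2\,dxdz$, and summing over $i=1,2$ recombines the two half-space integrals into $\frac{2}{\gamma_{N,s}}P_s(E)$. In particular, after relabelling so that $P_s(E_1)\le P_s(E_2)$, one has $P_s(E_1)\le P_s(E)$ and, by \eqref{fracisopbis}, $P_s(E_2)\le 2P_s(E)-P_s(E_1)\le 2P_s(E)-P_s(B)$, hence $D_s(E_1)\le D_s(E)$ and $D_s(E_2)\le 2D_s(E)$.

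\emph{Choosing $\tilde E$ and iterating.} It remains to select $\tilde E\in\{E_1,E_2\}$ with $A(E)\le C_N A(\tilde E)$. This is precisely the asymmetry bookkeeping of \cite{FMP}, run here with Lemma~\ref{tre} in place of its classical counterpart---so that, up to the factor $3$, an optimal ball of an $H$-symmetric set may be taken $H$-symmetric---and with Lemma~\ref{continuity} in place of the continuity lemma of \cite{FMP}; it exploits that at this stage $E$, $E_1$, $E_2$ are already symmetric in the first $k$ coordinate hyperplanes. One thereby produces $\tilde E$, symmetric in the first $k+1$ coordinate hyperplanes, with $|\tilde E|=|B|$, $D_s(\tilde E)\le 2D_s(E)$ and $A(E)\le C_N A(\tilde E)$. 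Iterating the step $N$ times gives the desired $N$-symmetric set $F$.

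The genuinely new point, and the only one that cannot be quoted verbatim from \cite{FMP}, is the fractional perimeter inequality $P_s(E_1)+P_s(E_2)\le 2P_s(E)$: unlike the local perimeter, $P_s$ does not split additively across the hyperplane $H$, so the elementary cut-and-paste of \cite{FMP} is not available and the estimate must be routed through the harmonic extension. The rest of the argument transcribes \cite{FMP}; its only delicate ingredient---unchanged here---is the asymmetry comparison of the previous paragraph, for which Lemmas~\ref{tre} and~\ref{continuity} are the fractional substitutes.
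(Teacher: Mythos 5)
Your one-step reflection inequality $P_s(E_1)+P_s(E_2)\le 2P_s(E)$ (obtained by reflecting the restriction of $u_E$ to the two half-spaces and using the minimality characterisation in Lemma~\ref{extension}) is exactly the paper's inequality \eqref{Nsymmetry1}, so that part is fine. The gap lies in the selection step: the claim that one can always pick $\tilde E\in\{E_1,E_2\}$ with $A(E)\le C_N A(\tilde E)$ after bisecting along a \emph{single} direction is false for $N\ge2$, and that is precisely why the paper (following \cite{FMP}, Lemma~2.5) works with \emph{two orthogonal} directions $\nu_1,\nu_2$ at once and picks $i\in\{1,2\}$, $j\in\{+,-\}$.

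A counterexample to the single-direction selection: fix $H=\{x_1=0\}$ and pick $a\ne b$ in $H$ (possible as soon as $N\ge2$). Set
\[
E:=\bigl(B(a,1)\cap\{x_1>0\}\bigr)\cup\bigl(B(b,1)\cap\{x_1<0\}\bigr).
\]
Since $a,b\in H$ the hyperplane $H$ already bisects $E$, and the two reflected sets are $F^{+}_{e_1}=B(a,1)$ and $F^{-}_{e_1}=B(b,1)$. Thus $A(F^{\pm}_{e_1})=0$ while $A(E)>0$, so no choice of $j$ gives $A(E)\le C\,A(F^{j}_{e_1})$. (Likewise $D_s(F^{\pm}_{e_1})=0$, so the deficit inequality gives no obstruction.) The set $E$ can even be taken symmetric in the first $k$ coordinate hyperplanes whenever $k\le N-2$ by choosing $a,b$ in $\operatorname{span}\{e_{k+2},\dots,e_N\}$, so your ``iterate one direction at a time'' scheme already breaks at the first step (for $N\ge 2$) and at every intermediate step. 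In the example, any second orthogonal direction $\nu_2\perp e_1$ with $\nu_2\cdot(b-a)\ne0$ does detect the asymmetry, which is exactly the mechanism that Lemma~2.5 of \cite{FMP} exploits and which the paper transposes here. The paper therefore does not iterate a one-direction claim; it proves the two-direction claim with Lemma~\ref{continuity} in place of the classical continuity lemma, and then ``reproduces word for word'' the pairing argument of \cite[Theorem~2.1]{FMP}. To repair your proof you must replace the single-direction selection by the two-direction claim and follow \cite{FMP}'s iteration over pairs of orthogonal directions.
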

\begin{proof}
Without loss of generality, assume that $|E|=|B|$. Given a direction $\nu\in\mathbb{S}^{N-1}$ and $\alpha\in\mathbb{R}$, 
let us set $H_\nu^\pm=\{x\in\RR^N:\,x\cdot\nu\gtrless \alpha\}$ be two half spaces orthogonal to $\nu$ such that 
$|E^\pm_\nu|=|E|/2$, where $E^\pm_\nu:=E\cap H_\nu^\pm$. 
Up to a translation we may assume that $\alpha=0$, {\it i.e.}, $H_\nu=\partial H^+_\nu$ passes through the origin. We also set 
$$
F^+_\nu:=E^+_\nu\cup \mathcal{R}_\nu(E^+_\nu), \quad F^-_\nu:=E^-_\nu\cup \mathcal{R}_\nu(E^-_\nu)\,,
$$
where $\mathcal{R}_\nu:\RR^N\to\RR^N$ denotes the reflection with respect to $H_\nu$. We claim that
\begin{equation}\label{Nsymmetry1}
P_s(E)\geq \frac{P_s(F^+_\nu)+P_s(F^-_\nu)}{2}\,.
\end{equation}
Indeed, let $u_E$ be the function defined in Remark~\ref{extensionrem}. We write $u_E$ as the sum of $\chi_{H^+_\nu\times\mathbb{R}_+}u_E^+$ and 
$\chi_{H^-_\nu\times\mathbb{R}_+}u_E^-$, where 
$$
u_E^\pm(x,z):=
\begin{cases}
u_E(x,z) & \text{if $x\in H^\pm_\nu$}\,, \\
u_E(\mathcal{R}_\nu(x),z) & \text{otherwise}\,.
\end{cases}
$$
It is well known that $u_E^\pm\in\mathcal{W}^{1,2}_s(\RR^{N+1}_+)$, and that 
$$  \int_{\RR^{N+1}_+}z^{1-s}|\nabla u^\pm_E|^2\,dxdz=2 \int_{H^\pm_\nu\times\mathbb{R}_+}z^{1-s}|\nabla u_E|^2\,dxdz\,.$$
Since $u_E^\pm(\cdot,0)=\chi_{F^\pm_\nu}$ we infer from Lemma~\ref{extension} that 
\begin{align*}
 \int_{\RR^{N+1}_+}z^{1-s}|\nabla u_E|^2\,dxdz & =\frac{1}{2}\int_{\RR^{N+1}_+}z^{1-s}\bigl(|\nabla u_E^+|^2+|\nabla u_E^-|^2\bigr)\,dxdz \\
 & \geq\frac{1}{2}\int_{\RR^{N+1}_+}z^{1-s}\bigl(|\nabla u_{F^+_\nu}|^2+|\nabla u_{F^-_\nu}|^2\bigr)\,dxdz\,,
\end{align*}
from which \eqref{Nsymmetry1} follows.

Next we observe that  the case $N=1$ immediately follows from \eqref{Nsymmetry1}. 
In fact, given the $E\subset\RR$ and denoting by $F^1$ and $F^2$ the set obtained by the construction above,  inequality \eqref{Nsymmetry1} yields  
$$
\frac{D_s(F^1)+D_s(F^2)}{2}\leq D_s(E)\,,
$$
while 
$$
A(E)\leq\frac{|E\triangle(-1,1)|}{2}\leq\frac12\Bigl(\frac{|F^1\triangle(-1,1)|}{2}+\frac{|F^2\triangle(-1,1)|}{2}\Bigr)\leq\frac32\bigl(A(F^1)+A(F^2)\bigr)\,,
$$
where the last inequality follows by Lemma~\ref{tre}. Hence the conclusion follows by taking $F^i$ for which $A(F^i)\geq A(E)/3$.
\vskip3pt

For $N\geq2$, we follow the strategy used in \cite{FMP} which is based on the following claim (see \cite[Lemma 2.5]{FMP}):
\par\noindent
{\sl Claim:} \, {\it There exist two constants $C$ and $\delta$, depending only on $N,s$, such that, given $E$ with $|E|=|B|$ and $D_s(E)\leq\delta$, 
and two orthogonal vectors $\nu_1$ and $\nu_2$ in $\mathbb{S}^{N-1}$, one can find $i\in\{1,2\}$ and $j\in\{+,-\}$  with the property that
$$
A(E)\leq CA(F^j_{\nu_i}),\quad D_s(F^j_{\nu_i})\leq2D_s(E)\,.
$$
}
Let us observe that the claim is easily proved when $A(E)\geq3/2$. Indeed, in this case any of the four possible choices $F^j_{\nu_i}$ would work. In fact, given $i\in\{1,2\}$ and $j\in\{+,-\}$, from \eqref{Nsymmetry1} we have that $D_s(F^j_{\nu_i})\leq2D_s(E)$. Moreover,  by the assumption $A(E)\geq3/2$ we have  $|E\cap B(x)|\leq|B|/4$ for all $x\in\RR^N$, and thus  $|F^j_{\nu_i}\cap B(x)|\leq|B|/2$ for all $x\in\RR^N$. Therefore $A(F^j_{\nu_i})\geq1$.

If instead $A(E)\leq3/2$, the proof of the claim follows exactly as the proof of Lemma~2.5 in~\cite{FMP} with the obvious observation that the continuity Lemma~2.3 in \cite{FMP} must be replaced here by our Lemma \ref{continuity}, which holds since $A(E)\leq3/2$ by assumption.

Once the claim above is proved, the argument used in the proof of Theorem~2.1 in~\cite{FMP} can be reproduced here word for word, thus leading to the conclusion.
\end{proof}

\noindent{\bf Proof of Theorem \ref{main}.} 
Without loss of generality we may assume that $|E|=|B|$, and that $E$ has finite $s$-perimeter. Moreover, 
we may also assume that $D_s(E)\leq 1$, and that  $E$ is an $N$-symmetric set  thanks to Proposition \ref{Nsymmetry}  . 

By Lemma \ref{trace} we have  $u^*_E\in\mathcal{W}^{1,2}_s(\RR^{N+1}_+)$ and $u^*_E=\chi_{B}$ on $\RR^N$, 
and we infer from Lemma~\ref{extension} and Remark~\ref{extensionrem} that 
$$P_s(B)=\frac{ \gamma_{N,s}}{2} \int_{\RR^{N+1}_+}z^{1-s}|\nabla u_{B}|^2\,dxdz\leq \frac{ \gamma_{N,s}}{2} \int_{\RR^{N+1}_+}z^{1-s}|\nabla u^*_E|^2\,dxdz\,.$$
From Lemma \ref{Morini} and Fubini's theorem,   we also deduce that  
\begin{equation}\label{main1}
\begin{split}
 \frac{2P_s(B)}{\gamma_{N,s}}D_s(E) & \geq \int_{\RR^{N+1}_+}z^{1-s}|\nabla u_E|^2\,dxdz-\int_{\RR^{N+1}_+}z^{1-s}|\nabla u^*_E|^2\,dxdz\\
 & \geq \int_{\RR^{N+1}_+}z^{1-s}\big(|\nabla_x u_E|^2-|\nabla_x u^*_E|^2\big)\,dxdz\\
&\geq \int_0^\infty z^{1-s}\left(\int_{\RR^N}|\nabla_x u_E|^2-|\nabla_x u^*_E|^2\,dx\right)\,dz\,.
\end{split}
\end{equation}

Let us now set $v_E:=\left(u_E-\frac12\right)^+$. 
It is standard to check that $v_E\in \mathcal{W}^{1,2}_s(\RR^{N+1}_+)$, $v_E(\cdot,0)=\frac{1}{2}\chi_E$, and that 
\begin{equation*}
\nabla v_E= \chi_{\{u_E>1/2\}}\nabla u_E \quad\text{a.e. in~$\mathbb{R}^{N+1}_+$}\,.
\end{equation*}
By Remark~\ref{extensionrem},  $v_E(\cdot, z)$ has compact 
support and belongs to $H^1(\mathbb{R}^N)$ for all $z>0$, and 
\begin{equation}\label{idvN}
\nabla_x v_E(\cdot,z)=\chi_{\{u_E(\cdot,z)>1/2\}}\nabla_x u_E(\cdot,z) \quad\text{a.e. in~$\mathbb{R}^{N}$}\,.
\end{equation}
Then we observe that    $v^*_E=\left(u^*_E-\frac12\right)^+$, 
so that $v^*_E\in \mathcal{W}^{1,2}_s(\RR^{N+1}_+)$, $v^*_E(\cdot,0)=\frac{1}{2}\chi_B$,  and 
\begin{equation*}
\nabla v^*_E= \chi_{\{u^*_E>1/2\}}\nabla u^*_E \quad\text{a.e. in $\mathbb{R}^{N+1}_+$}\,.
\end{equation*}
In addition, 
by the P\'olya-Szeg\"o inequality we have $v^*_E(\cdot,z)\in H^{1}(\mathbb{R}^N)$ for all $z>0$, and 
\begin{equation}\label{idvstarN}
\nabla_x v^*_E(\cdot,z)= \chi_{\{u^*_E(\cdot,z)>1/2\}}\nabla_x u^*_E(\cdot,z) \quad\text{a.e. in $\mathbb{R}^{N}$}\,.
\end{equation}
\vskip3pt
\noindent Squaring both sides of \eqref{as}, and integrating over $\mathbb{R}^N$, we infer
\begin{multline}
\int_{\RR^N}\left|u_E(x,z)\right|^2\,dx
\leq 2|E|+\frac{2z^s}{s}\int_{\RR^{N+1}_+}t^{1-s}\left|\nabla u_E(x,t)\right|^2dxdt \\
\leq 2|B|+\frac{4}{s\gamma_{N,s}} P_s(E)\leq 2|B|+\frac{8P_s(B)}{s\gamma_{N,s}} =:\beta(s)\,, \label{as2}
\end{multline}
where we have used the fact that $D_s(E)\leq1$ in the last inequality. 
As a consequence, for all $z\in(0,1)$ we have by Chebyshev's inequality, 
\begin{equation}\label{estisupp}
\left|{\rm supp}\, v_E(\cdot,z)\right|=\Bigl|\Bigl\{x\in\RR^N:\,u_E(x,z)\geq\frac12\Bigr\}\Bigr|\leq4\beta(s)\,.
\end{equation}
Since the set $E$ is $N$-symmetric, it follows from  \eqref{poisson} that $u_E$ and $v_E$ inherit the same symmetry with respect to $x$. 
Using Proposition~\ref{PSquant} and \eqref{estisupp}, we may now estimate for all $z\in(0,1)$, 
\begin{multline}\label{main2}
\int_{\RR^N}\left|v_E(x,z)-v_E^*(x,z)\right|\,dx \\
\leq c_N \beta(s)^{\frac{N+2}{2N}}\left(\int_{\RR^N}\left|\nabla_xv_E(x,z)\right|^2-\left|\nabla_xv_E^*(x,z)\right|^2\,dx\right)^{1/2},
\end{multline}
for a suitable constant $c_N>0$ depending only on $N$. 

Next we claim that for all $z>0$,  
\begin{equation}\label{estilevelset}
\int_{\{u_E(\cdot,z)=t\}}|\nabla_xu_E(x,z)|\,d{\mathcal H}_x^{N-1}-\int_{\{u_E^*(\cdot,z)=t\}}|\nabla_xu_E^*(x,z)|\,d{\mathcal H}_x^{N-1}\geq0\,\quad\text{for a.e. $t>0$}\,.
\end{equation}
Indeed, given $z>0$, we may argue as in the proof of \eqref{PSquant1} to obtain for a.e. $t>0$, 
\begin{multline*}
\int_{\{u_E(\cdot,z)=t\}}|\nabla_xu_E(x,z)|\,d{\mathcal H}_x^{N-1}\geq 
\frac{\big(\mathcal{H}^{N-1}(\{u_E(\cdot,z)=t\})\big)^2}{\int_{\{u_E(\cdot,z)=t\}}\frac{1}{|\nabla_x u_E(x,z)|}\, d{\mathcal H}_x^{N-1}} \\
\geq \frac{\big(\mathcal{H}^{N-1}(\{u^*_E(\cdot,z)=t\})\big)^2}{\int_{\{u^*_E(\cdot,z)=t\}}\frac{1}{|\nabla_x u^*_E(x,z)|}\, d{\mathcal H}_x^{N-1}} = 
\int_{\{u^*_E(\cdot,z)=t\}}|\nabla_xu^*_E(x,z)|\,d{\mathcal H}_x^{N-1}\,,
\end{multline*}
using \eqref{derivmu}, the standard isoperimetric inequality, and the fact that $|\nabla_x u^*_E(\cdot,z)|$ is constant on $\{u^*_E(\cdot,z)=t\}$.

Then we derive from \eqref{estilevelset}, \eqref{idvN},  \eqref{idvstarN}, and  the coarea formula that for all $z>0$, 
\begin{align}
\nonumber \int_{\RR^N}&\left|\nabla_xu_E(x,z)\right|^2-\left|\nabla_xu_E^*(x,z)\right|^2\,dx 
= \int_{\RR^N}\left|\nabla_xv_E(x,z)\right|^2-\left|\nabla_xv_E^*(x,z)\right|^2\,dx\\
\nonumber & + \int_{0}^{1/2} dt\biggl(\int_{\{u_E(\cdot,z)=t\}}|\nabla_xu_E(x,z)|\,d{\mathcal H}_x^{N-1}-\int_{\{u_E^*(\cdot,z)=t\}}|\nabla_xu_E^*(x,z)|\,d{\mathcal H}_x^{N-1}\biggr) \\
\label{ineqslice}&\geq \int_{\RR^N}\left|\nabla_xv_E(x,z)\right|^2-\left|\nabla_xv_E^*(x,z)\right|^2\,dx\,.
\end{align}
By an argument similar to the one used in the proof of \eqref{as} and \eqref{as2}, we may estimate for all $z>0$, 
\begin{multline*}
\frac{1}{2}\int_{B}\left|u_E(x,0)-u_E^*(x,0)\right|\,dx=\int_{B}\left|v_E(x,0)-v_E^*(x,0)\right|\,dx\\
\leq \int_{B}\left|v_E(x,z)-v_E^*(x,z)\right|\,dx
+\frac{|B|^{1/2}z^{s/2}}{\sqrt{s}}\biggl(\int_{B\times\RR_+}t^{1-s}|\nabla (v_E- v_E^*)|^2\,dxdt\biggr)^{\frac{1}{2}}\,.
\end{multline*}
From the above inequality,  \eqref{main2}, and \eqref{ineqslice} we deduce that for all $z\in(0,1)$, 
\begin{align*}
|B\setminus  E|
&=\int_{B}\left|u_E(x,0)-u_E^*(x,0)\right|\,dx\\
&\leq 2c_N \beta(s)^{\frac{N+2}{2N}} \left(\int_{\RR^N}\left|\nabla_xv_E(x,z)\right|^2-\left|\nabla_xv_E^*(x,z)\right|^2\,dx\right)^{1/2} \\
 &\qquad+\frac{2|B|^{1/2}z^{s/2}}{\sqrt{s}}\left(\int_{\RR^{N+1}_+}t^{1-s}\left|\nabla v_E(x,t)-\nabla v_E^*(x,t)\right|^2dxdt\right)^{1/2}\\
 &\leq2c_N \beta(s)^{\frac{N+2}{2N}} \left(\int_{\RR^N}\left|\nabla_xu_E(x,z)\right|^2-\left|\nabla_xu_E^*(x,z)\right|^2\,dx\right)^{1/2} \\
 &\qquad+\frac{2\sqrt{2}|B|^{1/2} z^{s/2}}{\sqrt{s}}\biggl(\int_{\RR^{N+1}_+}t^{1-s}\left(|\nabla u_E|^2+|\nabla u_E^*|^2\right)dxdt\biggr)^{1/2} 
\end{align*}
Let us fix $\tau\in (0,1]$ to be chosen. Squaring the first and last sides of the inequality above, multiplying by $z^{1-s}$, and integrating in $(0,\tau)$ with respect to $z$ yields 
\begin{multline*}
\frac{|B\setminus E|^2}{2-s}\tau^{2-s}\leq 8c^2_N \beta(s)^{\frac{N+2}{N}}
 \int_0^1 z^{1-s}\left(\int_{\RR^N}\left|\nabla_xu_E(x,z)\right|^2-\left|\nabla_xu_E^*(x,z)\right|^2\,dx\right)\,dz\\
 + \frac{8|B| \tau^2}{s} \int_{\RR^{N+1}_+}z^{1-s}\left(|\nabla u_E|^2+|\nabla u_E^*|^2\right)dxdz\,.
\end{multline*}
Using the P\'olya--Szeg\"o inequality, the  assumption $D_s(E)\leq1$, and \eqref{main1}, we derive that 
\begin{align}
\nonumber |B\setminus E|^2 & \leq 32c^2_N \beta(s)^{\frac{N+2}{N}} \frac{P_s(B)}{\gamma_{N,s}} D_s(E)\tau^{s-2} +\frac{64|B|P_s(B)}{s\gamma_{N,s}}\tau^s\\
\label{presqfini} &\leq C^*_{N,s} \left( D_s(E) \,\frac{\tau^{s-2}}{2-s} + \frac{\tau^s}{s}\right)\,,
\end{align}
with 
$$C^*_{N,s}:=  \frac{64 P_s(B)}{\gamma_{N,s}} \max\big\{c^2_N \beta(s)^{\frac{N+2}{N}}, |B| \big\}$$ 
which only depends on $s$ and $N$.  Next we observe that, among all values of $\tau\in(0,1]$, the right handside of 
\eqref{presqfini} is minimized for $\tau=\sqrt{D_s(E)}$. Hence, 
$$|B\setminus E|\leq \left(\frac{2C^*_{N,s}}{s(2-s)}\right)^{1/2} D_s(E)^{s/4}\,.$$
Finally we observe that $2|B\setminus E|=|B\triangle E|\geq |B| A(E)$ since $|E|=|B|$, and the proof is complete. 
\prbox
\vskip20pt



\noindent{\small {\bf \it Acknowledgements.}
This research was partially supported by the ERC
Advanced Grants 2008 \emph{Analytic Techniques for Geometric and
Functional Inequalities}. The research of V.M. was also partially supported by 
the Agence Nationale de la Recherche under Grant ANR-10-JCJC 0106.}


\end{document}